\documentclass[preprint,3p,times]{elsarticle}

\usepackage{amssymb}
\usepackage{amsmath}
\usepackage{graphicx}
\usepackage{amsfonts}
\usepackage{amsthm}
\usepackage{mathrsfs}
\usepackage[title]{appendix}
\usepackage{xcolor}
\usepackage{enumitem}
\usepackage{tikz}
\usepackage[colorlinks=true]{hyperref}
\usepackage[all,cmtip]{xy}
\input diagxy

\newcommand{\CL}{\mathcal{L}}

\newcommand{\CS}{\mathcal{S}}
\newcommand{\CT}{\mathcal{T}}

\newcommand{\ra}{\rightarrow}

\newcommand{\bbN}{\mathbb{N}}

\newcommand{\bbR}{\mathbb{R}}

\newcommand{\tauTL}{\tau_{_{T,L}}}
\newcommand{\Delp}{\Delta^+}
\newcommand{\tauTSL}{\tau_{_{T^*,L}}}
\newcommand{\vep}{\varepsilon}
\renewcommand{\leq}{\leqslant}
\renewcommand{\geq}{\geqslant}

\numberwithin{equation}{section}

\allowdisplaybreaks

\theoremstyle{plain}
  \newtheorem{thm}{Theorem}[section]
  \newtheorem{lem}[thm]{Lemma}
  \newtheorem{prop}[thm]{Proposition}
  
\theoremstyle{definition}

  \newtheorem{rem}[thm]{Remark}

  \newtheorem{prob}[thm]{Problem}

\makeatletter
\def\ps@pprintTitle{%
 \let\@oddhead\@empty
 \let\@evenhead\@empty
 \def\@oddfoot{\centerline{\thepage}}%
 \let\@evenfoot\@oddfoot}
\makeatother

\begin{document}

\begin{frontmatter}

\title{Generalized supremum convolutions on the tensor product of unit intervals: t-conorms and triangle functions}

\author{Mengyu Luo}\ead{lmysrem@qq.com}

\author{Lili Shen}\ead{shenlili@scu.edu.cn}

\author{Jie Zhang\corref{cor}}\ead{zjandzxz@qq.com}

\cortext[cor]{Corresponding author.}

\affiliation{organization={School of Mathematics, Sichuan University},
            city={Chengdu 610064},
            country={China}}

\begin{abstract}
The complete-lattice tensor product $\mathbb{E}=[0,1]\otimes[0,1]$ arises naturally in the algebra of truth values for type-2 fuzzy sets. For a monotone surjective map $S$ that is continuous on $[0,1[^2$ and a monotone map $T$ having $1$ as identity, we consider the generalized supremum convolution
\[
(f\otimes_{_{S,T}}g)(x)=\sup_{S(u,v)=x}T(f(u),g(v))
\]
on $\mathbb{E}$. Rather than assuming classical algebraic structures for $S$ and $T$ from the outset, we ask when they are forced by requiring the induced convolution itself to be a t-conorm. We prove that $\otimes_{_{S,T}}$ is a t-conorm on $\mathbb{E}$ if and only if $S$ is a continuous t-conorm and either $S=\max$ or $T$ is a right-continuous t-norm; in the exceptional case $S=\max$, the convolution is simply the pointwise maximum for every admissible $T$. The same convolution mechanism also yields an order-dual description of triangle functions on the space of distance distribution functions $\Delta^+$. Using an order-reversing isomorphism between $\Delta^+$ and a boundary variant of $\mathbb{E}$, we show that $\tau_{_{T^*,L}}$ is a triangle function precisely when $L$ is a continuous t-conorm with no zero divisor and, unless $L=\max$, $T^*$ is a left-continuous t-conorm; moreover, when $L\neq\max$, continuity of $\tau_{_{T^*,L}}$ is equivalent to continuity of $T^*$. This resolves the remaining $\tau_{_{T^*,L}}$ part of the Schweizer--Sklar problem.
\end{abstract}

\begin{keyword}
T-norm \sep T-conorm \sep Type-2 fuzzy set \sep Triangle function \sep Tensor product of complete lattices 
\end{keyword}

\end{frontmatter}

\section{Introduction}

Type-2 fuzzy sets, introduced by Zadeh \cite{Zadeh1975} as an extension of ordinary fuzzy sets, take their membership grades in the function space
\[
\mathbb{M}=[0,1]^{[0,1]}.
\]
Convolution operations on this truth-value space and on its structured subspaces have played a central role in the algebraic study of type-2 fuzzy sets \cite{Walker2005,Harding2008,Harding2010,DeMiguel2018}. In the present paper we focus on the complete lattice $\mathbb{E}$ of decreasing functions $f\colon[0,1]\to[0,1]$ satisfying $f(0)=1$ and left-continuity on $]0,1]$. This lattice identifies with the tensor product
\[
\mathbb{E}=[0,1]\otimes[0,1]
\]
in the category $\mathbf{Sup}$ of complete lattices and sup-preserving maps
\cite{GutierrezGarcia2017a,Eklund2018}.

A standard way to lift operations on $[0,1]$ to fuzzy truth values is Zadeh's extension principle. For a binary operation $*$ on $[0,1]$, the classical sup-min form assigns to $f,g\in\mathbb{M}$ the function
\[
x\mapsto \sup_{u*v=x}\min\{f(u),g(v)\}.
\]
Various generalized extensions and convolution constructions have subsequently been studied in the theory of type-2 fuzzy sets \cite{Kawaguchi2002,Hernandez2015,Wu2020,Wu2022}. In particular, the literature contains both sufficient-condition results for generalized extended t-norms and converse results asking whether the algebraic axioms of the induced operation force corresponding properties of the generating operation \cite{Hernandez2015,Wu2020}. Recent work has further clarified the relationships among several notions of type-2 t-norm arising from such generalized extensions \cite{Zhang2025a}. Related convolution structures on function spaces over bounded lattices were studied in \cite{DeMiguel2018}. These results motivate a broader converse question: how much structure of the generating operations is forced by the requirement that the induced convolution itself be a t-conorm?

To address this question on $\mathbb{E}$, we start with a map $S\in\CS$, assumed only to be surjective, increasing in each place, and continuous on $[0,1[^2$, and a map $T\in\CT$, assumed only to be increasing in each place and to have $1$ as identity. We then define
\[
(f\otimes_{_{S,T}}g)(x)
=
\sup_{S(u,v)=x}T(f(u),g(v)),
\qquad f,g\in\mathbb{E}.
\]
Thus neither a t-conorm structure on $S$ nor a t-norm structure on $T$ is assumed a priori. Our main theorem (Theorem \ref{thm:tensor_tconorm_characterization}) gives an exact characterization: $\otimes_{_{S,T}}$ is a well-defined t-conorm on $\mathbb{E}$ if and only if $S$ is a continuous t-conorm and either $S=\max$ or $T$ is a right-continuous t-norm. The exceptional case $S=\max$ is rigid in a different way: the convolution collapses to pointwise maximum and is therefore independent of the choice of $T\in\CT$. Hence the algebraic axioms of the induced operation, together with the closure conditions defining $\mathbb{E}$, recover the classical structure of the generating operations except in this precisely identified degenerate case. This distinguishes the present setting from earlier results on normal convex fuzzy truth values, where the operation determining the constraint in the convolution is already taken to be a t-norm or t-conorm \cite{Hernandez2015,Wu2020}.

The same convolution mechanism reappears, in order-dual form, in the theory of probabilistic metric spaces. Schweizer and Sklar asked for necessary and sufficient conditions under which the operations $\tauTL$ and $\tauTSL$ define (continuous) triangle functions on the space $\Delta^+$ of distance distribution functions \cite{Schweizer1983}; triangle functions and their generalized convolution forms are further discussed in \cite{SamingerPlatz2008}. The $\tauTL$ part of this problem was resolved in \cite{Lai2026}. We show that the remaining $\tauTSL$ problem is not an unrelated application of the preceding theory: an order-reversing isomorphism identifies $\Delta^+$ with a boundary variant $\mathbb{E}'$ of $\mathbb{E}$ and transforms $\tauTSL$ exactly into the supremum convolution studied above. This reduces the triangle-function problem to the same structural mechanism, with one additional boundary condition. As a consequence, $\tauTSL$ is a triangle function if and only if $L$ is a continuous t-conorm with no zero divisor and, unless $L=\max$, $T^*$ is a left-continuous t-conorm (Theorem \ref{thm:triangle_function_characterization}); moreover, $\tauTSL$ is continuous if and only if, in the nonmaximum case, $T^*$ is continuous (Theorem \ref{thm:continuous_triangle_function_characterization}). Together with \cite{Lai2026}, this completes the corresponding Schweizer--Sklar characterization for these two convolution-type triangle functions.

The content of this paper is organized as follows. Section \ref{Preliminaries} recalls the tensor product $\mathbb{E}$, the classes $\CS$ and $\CT$, and the required facts about t-norms and t-conorms. Section \ref{T-conorms-on-E} establishes the characterization theorem for $\otimes_{_{S,T}}$. Section \ref{Application-Schweizer-Sklar} constructs the order-reversing identification with $\Delta^+$ and derives the characterizations of triangle functions and continuous triangle functions.

\section{Preliminaries} \label{Preliminaries}

The terminology and notation used in this paper are primarily drawn from the literature on complete lattices, t-norms, and probabilistic metric spaces \cite{Klement2000,Schweizer1983}. Throughout this paper, we write
\begin{itemize}
\item $]a,b[=\{x\in\bbR\mid a<x<b\}$,
\item $[a,b[=\{x\in\bbR\mid a\leq x<b\}$,
\item $]a, b]=\{x\in\bbR\mid a<x\leq b\}$ and
\item $[a,b]=\{x\in\bbR\mid a\leq x\leq b\}$
\end{itemize}
for intervals in $\bbR$.


The \emph{tensor product} of the complete lattice $[0,1]$ with itself,
denoted by
\[
\mathbb{E}:=[0,1]\otimes[0,1],
\]
can be represented as the complete lattice consisting of functions
$f\colon[0,1]\to[0,1]$ satisfying
\[
f\left(\sup A\right)=\inf f(A)
\]
for all subsets $A\subseteq[0,1]$
\cite{GutierrezGarcia2017a,Eklund2018}.
In other words, $f\colon [0,1]\to[0,1]$ belongs to $\mathbb{E}$ if 
\begin{itemize}
\item $f(0)=1$, 
\item $f$ is decreasing on $[0,1]$, and
\item $f$ is left-continuous on $]0,1]$.
\end{itemize}
The order on $\mathbb{E}$ is defined pointwise.

In particular, for each $r\in[0,1]$, the function
\[\omega_r(x)=\begin{cases}
            1 & \text{if}\ 0\leq x \leq r,\\
            0 & \text{if}\ r < x \leq 1
\end{cases}\]
is in $\mathbb{E}$, and it will be frequently used later.


\begin{rem}
It is worth noting the connection between $\mathbb{E}$ and the truth value algebra of type-2 fuzzy sets; related convolution algebras and lattice structures have been studied in \cite{Walker2005,Harding2008,Harding2010,DeMiguel2018}. Let 
\[\mathbb{M} = [0,1]^{[0,1]}\] 
denote the set of all such truth values. The standard meet operation $\sqcap$ on $\mathbb{M}$, derived via Zadeh's extension principle, is given by
\[(f \sqcap g)(x) = \sup\limits_{\min\{r,s\} = x} \min \{f(r), g(s)\}\]
for all $f,g \in \mathbb{M}$ and $x \in [0,1]$. This operation naturally induces a meet order $\sqsubseteq_\sqcap$ on $\mathbb{M}$ defined by 
\[f \sqsubseteq_\sqcap g\iff f \sqcap g = f.\]
Since the functions in $\mathbb{E}$ are decreasing, \cite[Proposition 2.4.4]{Harding2016} guarantees that the meet order $\sqsubseteq_\sqcap$ restricted to $\mathbb{E}$ coincides precisely with the usual pointwise order. Consequently, $\mathbb{E}$ becomes a complete lattice under this order.
\end{rem}

\begin{lem} \label{finE:upper-semi-continuous}
Every function $f\in\mathbb{E}$ is upper semi-continuous.
\end{lem}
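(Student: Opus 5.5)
To prove Lemma \ref{finE:upper-semi-continuous}, I will verify upper semi-continuity through superlevel sets. A function $f\colon[0,1]\to[0,1]$ is upper semi-continuous if and only if, for every $a\in[0,1]$, the set
\[
U_a=\{x\in[0,1]\mid f(x)\geq a\}
\]
is closed in $[0,1]$. Equivalently, $\limsup_{y\to x}f(y)\leq f(x)$ for every $x$. The plan is to show that each $U_a$ is a closed initial segment $[0,c]$.

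First, since $f$ is decreasing, $U_a$ is a down-set of $[0,1]$. It is nonempty, because $0\in U_a$ by $f(0)=1\geq a$. Set $c=\sup U_a$, so that $[0,c[\subseteq U_a\subseteq[0,c]$. It then suffices to show $c\in U_a$.

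If $c=0$, this holds because $0\in U_a$. If $c>0$, take any sequence $x_n\in U_a$ with $x_n\uparrow c$ (possible since $[0,c[\subseteq U_a$). Left-continuity of $f$ at $c\in\,]0,1]$ gives $f(c)=\lim f(x_n)\geq a$. More directly, the defining property $f(\sup A)=\inf f(A)$ with $A=U_a$ yields $f(c)=\inf f(U_a)\geq a$. Hence $U_a=[0,c]$ is closed, and $f$ is upper semi-continuous.

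As a cross-check via the $\limsup$ formulation at a point $x$: from the right, monotonicity gives $f(y)\leq f(x)$ for $y>x$; from the left, left-continuity gives $f(y)\to f(x)$. So $\limsup_{y\to x}f(y)\leq f(x)$. There is no real obstacle here; the only point needing care is the endpoint $x=0$, where left-continuity is not assumed, but there only right neighbours exist and monotonicity alone suffices.
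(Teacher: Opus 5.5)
Your proof is correct, but its main argument differs from the paper's. The paper uses the sequential formulation directly. It fixes $u>0$ and $u_n\to u$, then controls the terms with $u_n\leq u$ by left-continuity and those with $u_n>u$ by monotonicity, which gives $\limsup_{n\to\infty} f(u_n)\leq f(u)+\varepsilon$. That is essentially your closing cross-check.

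Your primary route instead uses the closed-superlevel-set characterization of upper semi-continuity. You show that for each $a\in[0,1]$ the set $\{f\geq a\}$ is exactly the closed initial segment $[0,\sup\{f\geq a\}]$. The key step $f(c)\geq a$ then follows in one line from the defining tensor-product identity $f(\sup A)=\inf f(A)$ applied to $A=\{f\geq a\}$. This gives a case-free argument (the identity even covers $c=0$). It also shows the order-theoretic reason for the lemma: superlevel sets of elements of $\mathbb{E}$ are principal down-sets.

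The paper's version, in turn, produces the sequential form ``$f(u_n)<f(u)+\varepsilon$ eventually'' directly, and that is the form invoked in the proof of Lemma \ref{lem:supremum_attained}. Your route reaches it through the standard equivalence, in metric spaces, between closed superlevel sets and this sequential condition, which you state.
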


\begin{proof}
Since $f \in \mathbb{E}$, it is decreasing and left-continuous. Let $u\in[0,1]$ and $u_n\ra u$. If $u=0$, then
\[\limsup_{n\ra\infty}f(u_n)\leq 1=f(0),\]
so the assertion is clear. Assume now that $u>0$. For any $\varepsilon>0$, by the left-continuity of $f$ at $u$ there exists $\delta>0$ such that
\[u-\delta < t \leq u \implies f(t) < f(u)+\varepsilon.\]
Choose an integer $N$ such that $u_n>u-\delta$ for all $n\geq N$. If $u_n\leq u$, then $f(u_n)<f(u)+\varepsilon$; if $u_n>u$, then $f(u_n)\leq f(u)$ because $f$ is decreasing. In either case, we have $\limsup\limits_{n\ra\infty} f(u_n)\leq f(u)+\varepsilon$, and consequently $\limsup\limits_{n\ra\infty} f(u_n)\leq f(u)$ by the arbitrariness of $\varepsilon$. Thus, $f$ is upper semi-continuous.
\end{proof}

We denote by $\CT$ \cite{Schweizer1983} the set of binary operations $T$ on $[0,1]$ that satisfy the following conditions:
\begin{enumerate}[label=(T\arabic*)]
\item \label{T1} $T$ is increasing in each place, i.e., $T(x_1, y_1) \leq T(x_2, y_2)$ whenever $x_1 \leq x_2$ and $y_1 \leq y_2$, and 
\item \label{T2} $T$ has $1$ as identity, i.e., $T(1,x) = T(x,1) = x$ for all $x\in [0,1]$.
\end{enumerate} 
Moreover, we denote by $\CS$ the set of binary operations $S$ on $[0,1]$ that satisfy the following conditions:
\begin{enumerate}[label=(S\arabic*)]
\item \label{S1} $S$ is surjective, 
\item \label{S2} $S$ is increasing in each place, and 
\item \label{S3} $S$ is continuous on $[0,1[^2$.
\end{enumerate}
 
For any $T \in \CT$ and $S \in \CS$, there is a map 
\[-\otimes_{_{S,T}}-\colon\mathbb{E} \times \mathbb{E}\to\mathbb{M}\]
with
\[(f \otimes_{_{S,T}} g)(x) = \sup\limits_{S(u,v)=x} T(f(u),g(v))\]
for all $f, g \in \mathbb{E}$ and $x \in [0,1]$.

\begin{rem} \label{rem:continuity_E}
The purpose of this paper is to characterize the necessary and sufficient conditions for $\otimes_{_{S,T}}$ to be a t-conorm on $\mathbb{E}$. For this construction to be meaningful, $\otimes_{_{S,T}}$ must first be a well-defined binary operation on $\mathbb{E}$. If $S$ lacks sufficient continuity, the supremum convolution can easily produce discontinuities that violate the decreasing property of the resulting function, causing it to fall outside $\mathbb{E}$.

To illustrate the potential failure of this operation without such continuity, consider a discontinuous $S$ defined by 
\[S(x,0)=x,\quad S(0,y)=y\quad\text{and}\quad S(x,y)=1\] 
for $x,y > 0$. Taking $f=g=\omega_{0.5} \in \mathbb{E}$, a direct calculation yields
\[(f \otimes_{_{S,T}} g)(1) = 1, \quad \text{whereas for } x \in [0.8, 1[, \ (f \otimes_{_{S,T}} g)(x) = 0.\]
Since $(f \otimes_{_{S,T}} g)(0.8) < (f \otimes_{_{S,T}} g)(1)$, the result fails to be a decreasing function.

Therefore, we assume condition \ref{S3} to facilitate our discussion. This choice avoids assuming continuity on the whole square $[0,1]^2$ while providing sufficient regularity for the arguments below.
\end{rem}

Let $(P, \leq, 0, 1)$ be a bounded poset. A \emph{t-norm} on $P$ \cite{DeBaets1999} is a commutative and associative binary operation  $T\colon P^2\to P$ that has $1$ as identity and is increasing in each place. A \emph{t-conorm} on $P$ is a commutative and associative binary operation  $S:P^2 \to P$ that has $0$ as identity and is increasing in each place.

\begin{rem}
Since the primary objective of this paper is to establish the necessary and sufficient conditions for $\otimes_{_{S,T}}$ to be a t-conorm on $\mathbb{E}$, it is instructive to explicitly instantiate the above definition for our underlying lattice. Recall that the minimum element in $\mathbb{E}$ is $\omega_0$. Consequently, a t-conorm on $\mathbb{E}$ is precisely a commutative and associative binary operation on $\mathbb{E}$ that is increasing in each place and possesses $\omega_0$ as its identity element.
\end{rem}

Let $T$ be a t-norm on the closed interval $[0,1]$. We say that $T$ is \emph{continuous} if it is a continuous binary function on $[0,1]^2$. $T$ is \emph{left-continuous} if
\[T(x,y)=\sup \, \{T(u,v) \mid u<x,\ v<y\}\]
for all $x, y \in ]0,1]$. Similarly, $T$ is \emph{right-continuous} if
\[T(x,y)=\inf \, \{T(u,v) \mid u > x,\ v > y\}\]
for all $x, y \in [0,1[$. For an increasing binary operation $T$, this condition is equivalent to right-continuity in each variable separately; that is,
\[
T(x,y)=\inf\,\{T(u,y)\mid u>x\}
      =\inf\,\{T(x,v)\mid v>y\}
\]
for all $x,y\in[0,1[$. It is well known that $T$ is continuous if and only if it is both left-continuous and right-continuous.

\begin{lem} \label{lem:max(x,y)-leq-S(x,y)}
If an operation $S\in\CS$ has $0$ as identity, then $S(x,y)\geq\max\{x,y\}$ for all $x,y\in[0,1]$. In particular, $S(x,1)=S(1,y)=1$ for all $x,y\in[0,1]$.
\end{lem}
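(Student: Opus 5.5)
The argument needs only monotonicity \ref{S2} together with the identity law; surjectivity and continuity play no role. Fix $x,y\in[0,1]$. Since $y\geq 0$ and $S$ is increasing in its second place, the identity law gives
\[
S(x,y)\geq S(x,0)=x.
\]
Likewise, since $x\geq 0$ and $S$ is increasing in its first place,
\[
S(x,y)\geq S(0,y)=y.
\]
Together these give $S(x,y)\geq\max\{x,y\}$.

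For the particular case, put $y=1$ in the inequality just proved. This gives $S(x,1)\geq\max\{x,1\}=1$. Since $S$ takes values in $[0,1]$, we get $S(x,1)\leq 1$, and hence $S(x,1)=1$. The same argument with $x=1$ gives $S(1,y)=1$.

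There is no real obstacle here. The only point to be careful about is that commutativity is not assumed for operations in $\CS$. So the identity law has to be used in both places, $S(x,0)=x$ and $S(0,y)=y$, rather than deriving one from the other by symmetry. This is exactly what the phrase "has $0$ as identity" provides.
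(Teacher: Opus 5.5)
Your proof is correct and follows essentially the same route as the paper: you combine monotonicity with both identity laws $S(x,0)=x$ and $S(0,y)=y$ to get $S(x,y)\geq\max\{x,y\}$. The endpoint values $S(x,1)=S(1,y)=1$ then follow from the upper bound $S\leq 1$. Your observation that surjectivity and continuity are not used, and that both identity laws are needed because commutativity is not assumed, is accurate.
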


\begin{proof}
Since $0$ is the identity of $S$, we have $x = S(x,0) \leq S(x,y) \leq 1$, and similarly $y = S(0,y) \leq S(x,y) \leq 1$ for all $x,y \in [0,1]$. Hence, $\max\{x,y\} \leq S(x,y) \leq 1$ for all $x,y \in [0,1]$. In particular, $1=\max\{x,1\}\leq S(x,1)\leq 1$ yields $S(x,1)=1$, and similarly $S(1,y)=1$.
\end{proof}

\begin{lem} \label{lem:s1_implies_cont} 
If an operation $S\in\CS$ has $0$ as identity, then it is continuous on the entire domain $[0,1]^2$.
\end{lem}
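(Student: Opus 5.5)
Since $S$ is already continuous on $[0,1[^2$ by \ref{S3}, only the points of $[0,1]^2\setminus[0,1[^2$ need to be checked, namely the points $(x_0,y_0)$ with $x_0=1$ or $y_0=1$. By Lemma \ref{lem:max(x,y)-leq-S(x,y)}, $S(x_0,y_0)=1$ at every such point. So it suffices to show that $S(x_n,y_n)\to 1$ whenever $(x_n,y_n)\to(x_0,y_0)$ with $x_0=1$, say; the case $y_0=1$ is symmetric.

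The key bound again comes from Lemma \ref{lem:max(x,y)-leq-S(x,y)}. For every $n$,
\[
1\geq S(x_n,y_n)\geq \max\{x_n,y_n\}\geq x_n .
\]
Since $x_n\to 1$, the squeeze gives $S(x_n,y_n)\to 1=S(1,y_0)$. This proves continuity at every point of the closed top and right edges, corners included. Combined with \ref{S3}, $S$ is continuous on all of $[0,1]^2$.

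I expect no genuine obstacle here. The only point to be careful about is that \ref{S3} gives continuity on the open set $[0,1[^2$ relative to $[0,1]^2$, so continuity there in the full domain is immediate. The boundary argument above covers every remaining point, and sequential continuity suffices in this metric setting. Surjectivity \ref{S1} is not needed; it is in fact automatic from the identity.
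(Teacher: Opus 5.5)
Your proof is correct and follows the paper's argument essentially verbatim: continuity on $[0,1[^2$ comes from (S3), and at points with $x_0=1$ or $y_0=1$ you squeeze using $\max\{x,y\}\leq S(x,y)\leq 1$ from Lemma~\ref{lem:max(x,y)-leq-S(x,y)} together with $S(x_0,y_0)=1$. Your extra remark that $[0,1[^2$ is relatively open in $[0,1]^2$, so that continuity there carries over to the full domain, is a small point the paper leaves implicit.
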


\begin{proof}
Since $S \in \CS$, the condition \ref{S3} guarantees that $S$ is continuous on $[0,1[ \times [0,1[$. It remains to verify its continuity at the boundaries, i.e., at points $(x_0, 1)$ and $(1, y_0)$ for any $x_0, y_0 \in [0,1]$.

Since $0$ is the identity of $S$, we have $\max\{x,y\}\leq S(x,y)\leq 1$ by Lemma \ref{lem:max(x,y)-leq-S(x,y)}. Let $(x,y)$ approach $(x_0, 1)$ in $[0,1]^2$. Since $\lim\limits_{(x,y) \ra (x_0, 1)} \max\,\{x,y\} = \max\,\{x_0, 1\} = 1$, the squeeze theorem yields
\[\lim\limits_{(x,y) \ra (x_0, 1)} S(x,y) = 1.\]
Furthermore, Lemma \ref{lem:max(x,y)-leq-S(x,y)} ensures that $S(x_0, 1) = 1$. Thus, $\lim\limits_{(x,y) \ra (x_0, 1)} S(x,y) = S(x_0, 1)$, proving that $S$ is continuous at $(x_0, 1)$. It follows similarly that $S$ is continuous at $(1, y_0)$. Therefore, $S$ is continuous on the entire domain $[0,1]^2$.
\end{proof}

Let $S$ be a continuous t-conorm on $[0,1]$. An element $x \in [0,1]$ is said to be \emph{idempotent} if $S(x,x)=x$. Clearly, the set of all idempotent elements of a continuous t-conorm forms a closed subset of $[0,1]$ containing $0$ and $1$. Thus, for any non-idempotent element $x\in]0,1[$, we can naturally define the following two elements:
\[a_x = \max\, \{e \in [0, x] \mid S(e,e)=e \}, \quad b_x = \min\, \{e \in [x, 1] \mid S(e,e)=e \}.\]
Restricting the binary operation $S$ to the closed interval $[a_x,b_x]$, we obtain an ordered monoid, denoted by 
\[([a_x,b_x],S).\]

Before proceeding, recall that the truncated addition (or bounded sum) on $[0,1]$, denoted by $\oplus$, is defined as $u \oplus v = \min\,\{u+v, 1\}$ for all $u,v \in [0,1]$. The following conclusion is of fundamental importance in the theory of continuous t-conorms. It follows straightforwardly from \cite[Corollary 5.12]{Klement2000}.

\begin{prop}\label{ordinal_sum}
Let $S$ be a continuous t-conorm on $[0,1]$. If $x\in]0, 1[$ is non-idempotent, then the restricted monoid $([a_x,b_x],S)$ is isomorphic, as an ordered monoid, to either $([0,\infty],+)$ or $([0,1],{\oplus})$. In particular, if $S$ has no non-trivial idempotent elements (i.e., only $0$ and $1$ are idempotent), then $([0,1],S)$ is isomorphic to either $([0,\infty],+)$ or $([0,1],{\oplus})$.
\end{prop}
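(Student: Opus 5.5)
The plan is to reduce the local statement to the global classification of continuous t-norms without non-trivial idempotents in \cite{Klement2000}, by rescaling and then dualizing. The second assertion of Proposition \ref{ordinal_sum} is the special case $a_x=0$, $b_x=1$, so it suffices to treat a non-idempotent $x\in]0,1[$ with its interval $[a_x,b_x]$.

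\emph{Step 1: $([a_x,b_x],S)$ is a monoid with identity $a_x$.} Let $e$ be an idempotent of $S$ and let $y\geq e$. I will first show that $S(e,y)=y$.
\begin{itemize}
\item The function $S(e,\cdot)$ is continuous, with $S(e,e)=e$ and $S(e,1)=1$ by Lemma \ref{lem:max(x,y)-leq-S(x,y)}. By the intermediate value theorem there is $z$ with $S(e,z)=y$.
\item Associativity and idempotency then give $S(e,y)=S(e,S(e,z))=S(S(e,e),z)=S(e,z)=y$.
\end{itemize}
Applying this with $e=a_x$ shows that $a_x$ acts as identity on $[a_x,b_x]$. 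For closure, take $u,v\in[a_x,b_x]$. By monotonicity and idempotency of $b_x$,
\[
a_x\leq \max\{u,v\}\leq S(u,v)\leq S(b_x,b_x)=b_x .
\]

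\emph{Step 2: rescale and dualize.} Let $\varphi\colon[a_x,b_x]\to[0,1]$ be the increasing affine bijection. Then $S'(s,t)=\varphi(S(\varphi^{-1}s,\varphi^{-1}t))$ is a continuous t-conorm on $[0,1]$.
\begin{itemize}
\item It has identity $0$ by Step 1.
\item $S'(s,1)=1$, because $b_x$ is idempotent and, by Step 1, $S(u,b_x)=b_x$ for $u\leq b_x$.
\item Its only idempotents are $0$ and $1$, since any idempotent of $S$ strictly between $a_x$ and $b_x$ would contradict the maximality of $a_x$ or the minimality of $b_x$.
\end{itemize}
Setting $T'(s,t)=1-S'(1-s,1-t)$ gives a continuous t-norm on $[0,1]$ whose only idempotents are $0$ and $1$.

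\emph{Step 3: classify.} By \cite{Klement2000}, a continuous t-norm with only trivial idempotents is Archimedean. It therefore has a continuous additive generator and is isomorphic either to the product t-norm (strict case) or to the \L ukasiewicz t-norm (nilpotent case). This is the content behind \cite[Corollary 5.12]{Klement2000}, and I expect this step to be the main obstacle. I would cite it rather than reprove it; the only point to check is that ``no interior idempotents'' is exactly the Archimedean hypothesis used there.

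\emph{Step 4: translate back.} The two cases become the two monoids in the statement.
\begin{itemize}
\item \emph{Product case.} The map $t\mapsto-\log t$ is an order-reversing isomorphism from $([0,1],\cdot)$ onto $([0,\infty],+)$, with $-\log 0=\infty$. Composing it with $s\mapsto 1-s$ and with $\varphi$ yields an order-preserving monoid isomorphism $([a_x,b_x],S)\cong([0,\infty],+)$.
\item \emph{\L ukasiewicz case.} The map $t\mapsto 1-t$ turns $\max\{s+t-1,0\}$ into $\min\{s+t,1\}$. This gives $([a_x,b_x],S)\cong([0,1],\oplus)$.
\end{itemize}
In both cases the composite is increasing, since two order reversals cancel. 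Hence it is an isomorphism of ordered monoids.
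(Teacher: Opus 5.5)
Your proposal is correct and takes essentially the paper's route. The paper simply invokes \cite[Corollary 5.12]{Klement2000}, while you spell out the reduction it leaves implicit: $[a_x,b_x]$ is a submonoid with identity $a_x$ and no interior idempotents, and rescaling and dualizing turn it into a continuous Archimedean t-norm, hence one isomorphic to the product or \L ukasiewicz t-norm. One small fix: the composite in Step 4 should explicitly include the increasing isomorphism from Step 3 that carries $T'$ to the product (respectively \L ukasiewicz) t-norm, not only $-\log$, $s\mapsto 1-s$ and $\varphi$.
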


\section{T-conorms on the tensor product $\mathbb{E}$} \label{T-conorms-on-E}

The primary objective of this section is to characterize the necessary and sufficient conditions under which the operation $\otimes_{_{S,T}}$ is a well-defined t-conorm on the tensor product space $\mathbb{E}$. We first establish its basic monotonicity and boundary behavior, and then prove that it preserves the defining decreasing and left-continuous properties of $\mathbb{E}$. We next verify the required algebraic properties and derive the necessary conditions on the generating operations $S$ and $T$. These results lead to the main characterization theorem of the section.

\begin{prop}\label{prop:tensorST_decreasing} 
For any $T \in \CT$ and any $S \in \CS$ that has $0$ as its identity, the function $f \otimes_{_{S,T}} g$ is decreasing on $[0,1]$ and satisfies $(f \otimes_{_{S,T}} g)(0)=1$. Moreover, the map $\otimes_{_{S,T}}$ is increasing in each place on $\mathbb{E} \times \mathbb{E}$.
\end{prop}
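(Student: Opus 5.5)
We prove the three assertions in turn: the value at $0$, the decreasing property, and monotonicity of the operation. Throughout, the only tools needed are Lemma \ref{lem:max(x,y)-leq-S(x,y)} and the continuity of $S$ on $[0,1]^2$ from Lemma \ref{lem:s1_implies_cont}.

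For the value at $0$, note that $(u,v)=(0,0)$ satisfies $S(0,0)=0$ because $0$ is the identity. Since $f(0)=g(0)=1$ and $T(1,1)=1$ by \ref{T2}, the supremum equals $1$. All values of $T$ lie in $[0,1]$, so $(f\otimes_{_{S,T}}g)(0)=1$.

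For the decreasing property, fix $x<y$ in $[0,1]$ and take any pair $(u',v')$ with $S(u',v')=y$. The aim is a pair $(u,v)\leq(u',v')$ coordinatewise with $S(u,v)=x$. Consider the path $t\mapsto S(tu',tv')$ on $[0,1]$. It is continuous by Lemma \ref{lem:s1_implies_cont}, takes the value $S(0,0)=0\leq x$ at $t=0$, and the value $y>x$ at $t=1$. The intermediate value theorem gives $t_0\in[0,1]$ with $S(t_0u',t_0v')=x$. Set $u=t_0u'\leq u'$ and $v=t_0v'\leq v'$. Since $f$ and $g$ are decreasing, $f(u)\geq f(u')$ and $g(v)\geq g(v')$, so \ref{T1} gives
\[
T(f(u'),g(v'))\leq T(f(u),g(v))\leq (f\otimes_{_{S,T}}g)(x).
\]
Taking the supremum over all such $(u',v')$ yields $(f\otimes_{_{S,T}}g)(y)\leq(f\otimes_{_{S,T}}g)(x)$.

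We must also check that the supremum defining $(f\otimes_{_{S,T}}g)(x)$ is never over an empty set. The map $u\mapsto S(u,0)=u$ already attains every $x$, so every $x$ has at least one preimage (this is also \ref{S1}).

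Monotonicity of the operation in each place is immediate. If $f_1\leq f_2$ pointwise, then $T(f_1(u),g(v))\leq T(f_2(u),g(v))$ for every pair $(u,v)$ by \ref{T1}, and taking suprema over $S(u,v)=x$ gives $f_1\otimes_{_{S,T}}g\leq f_2\otimes_{_{S,T}}g$. The second variable is handled symmetrically.

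The only nontrivial step is finding the dominated preimage in the decreasing argument. This is where continuity of $S$ on the whole square is required, and it is why Lemma \ref{lem:s1_implies_cont} must be invoked rather than \ref{S3} alone: $u'$ or $v'$ may equal $1$, so the path can pass through the boundary of the square.
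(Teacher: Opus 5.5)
Your proof is correct and follows essentially the same route as the paper. The dominated preimage comes from continuity of $S$ on all of $[0,1]^2$ (Lemma \ref{lem:s1_implies_cont}); your segment $t\mapsto S(tu',tv')$ with the intermediate value theorem is a concrete instance of the paper's connectedness argument on $[0,u']\times[0,v']$, and monotonicity in each place is argued identically. The only small variation is at $x=0$, where you use that $(0,0)$ lies in the fibre and that $T\leq 1$, while the paper shows the fibre over $0$ is exactly $\{(0,0)\}$; both arguments are valid.
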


\begin{proof}
First, we show that $f \otimes_{_{S,T}} g$ is decreasing on $[0,1]$. Let $x_1, x_2 \in [0,1]$ with $x_1 < x_2$. For any pair $(u_2, v_2)$ such that $S(u_2, v_2) = x_2$, we observe that since $S$ has $0$ as its identity, Lemma \ref{lem:s1_implies_cont} ensures that its range on the rectangle $[0, u_2] \times [0, v_2]$ covers the interval $[0, x_2]$\footnote{Indeed, the rectangle $[0,u_2]\times[0,v_2]$ is connected, and by Lemma \ref{lem:s1_implies_cont} the restriction of $S$ to this rectangle is continuous. Hence its image is a connected subset of $[0,1]$. Since this image contains both $S(0,0)=0$ and $S(u_2,v_2)=x_2$, it must contain the whole interval $[0,x_2]$.}. Because $x_1<x_2$, there exists $(u_1,v_1)\in[0,u_2]\times[0,v_2]$ such that $S(u_1,v_1)=x_1$.

Since $f, g \in \mathbb{E}$ are decreasing, it follows that $f(u_1) \geq f(u_2)$ and $g(v_1) \geq g(v_2)$. The monotonicity of $T$ then implies
\[
    T(f(u_1), g(v_1)) \geq T(f(u_2), g(v_2)).
\]
By the definition of the supremum, we have
\[
    (f \otimes_{_{S,T}} g)(x_1) \geq T(f(u_1), g(v_1)) \geq T(f(u_2), g(v_2)).
\]
Taking the supremum over all pairs $(u_2, v_2)$ satisfying $S(u_2, v_2) = x_2$ yields
\[
    (f \otimes_{_{S,T}} g)(x_1) \geq \sup\limits_{S(u_2,v_2)=x_2} T(f(u_2), g(v_2)) = (f \otimes_{_{S,T}} g)(x_2).
\]
Thus, $f \otimes_{_{S,T}} g$ is decreasing.

Second, we evaluate $(f \otimes_{_{S,T}} g)(0)$. By definition,
\[
    (f \otimes_{_{S,T}} g)(0) = \sup\limits_{S(u,v)=0} T(f(u), g(v)).
\]
Since $S$ is increasing in each place and has $0$ as its identity, we have $S(u,v) \geq S(u, 0) = u$ and $S(u,v) \geq S(0, v) = v$. Therefore, the condition $S(u,v) = 0$ implies $u = 0$ and $v = 0$. Since $f(0) = g(0) = 1$ for all functions in $\mathbb{E}$, and $T$ has $1$ as its identity, we obtain
\[
    (f \otimes_{_{S,T}} g)(0) = T(f(0), g(0)) = T(1, 1) = 1.
\]

Finally, we show that $\otimes_{_{S,T}}$ is increasing in each place on $\mathbb{E} \times \mathbb{E}$. Let $f_1, f_2 \in \mathbb{E}$ such that $f_1 \leq f_2$. For any $g \in \mathbb{E}$ and any $(u,v) \in [0,1]^2$, the monotonicity of $T$ guarantees that $T(f_1(u), g(v)) \leq T(f_2(u), g(v))$. Taking the supremum over the set $\{(u,v) \mid S(u,v) = x\}$, we deduce
\[(f_1\otimes_{_{S,T}} g)(x) = \sup\limits_{S(u,v)=x} T(f_1(u), g(v)) \leq \sup\limits_{S(u,v)=x} T(f_2(u), g(v)) = (f_2\otimes_{_{S,T}} g)(x).\]
This confirms that $f_1 \otimes_{_{S,T}} g \leq f_2 \otimes_{_{S,T}} g$. By the symmetry of the conditions on the arguments, $\otimes_{_{S,T}}$ is also increasing in its second place.
\end{proof}

\begin{lem}\label{lem:supremum_attained}
Suppose that $T$ is a right-continuous t-norm on $[0,1]$ and $S$ is a continuous t-conorm on $[0,1]$. Given $f,g\in\mathbb{E}$ and $x\in [0,1]$, there exists a pair $(u_x, v_x)$ such that 
\[S(u_x, v_x) = x\quad\text{and}\quad(f\otimes_{_{S,T}} g)(x) = T (f(u_x),g(v_x)).\]
\end{lem}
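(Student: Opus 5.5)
The plan is a compactness argument. The level set $K_x=\{(u,v)\in[0,1]^2\mid S(u,v)=x\}$ is compact and, by Lemma \ref{lem:s1_implies_cont}, nonempty for every $x$. On it I will show that $\phi(u,v)=T(f(u),g(v))$ is upper semi-continuous, so that its supremum is attained.

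Nonemptiness is immediate: Lemma \ref{lem:s1_implies_cont} gives that $S$ is continuous on $[0,1]^2$. Since $S(x,0)=x$, the point $(x,0)$ lies in $K_x$. Continuity of $S$ also makes $K_x$ a closed subset of $[0,1]^2$, hence compact.

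Next I choose a maximizing sequence $(u_n,v_n)\in K_x$ with $\phi(u_n,v_n)\to s:=(f\otimes_{_{S,T}} g)(x)$. Passing to a subsequence, we may assume $(u_n,v_n)\to(u_x,v_x)$, and this limit lies in $K_x$ by closedness. It remains to prove $\phi(u_x,v_x)\geq s$; the reverse inequality holds by definition of the supremum. This is the main step, and it combines Lemma \ref{finE:upper-semi-continuous} with right-continuity of $T$.

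Fix $\varepsilon>0$. I set $a=f(u_x)$ and $b=g(v_x)$, and first treat the case $a,b<1$. Right-continuity of $T$ in each variable, together with monotonicity, yields $a'>a$ and $b'>b$ with $T(a',b')<T(a,b)+\varepsilon$. To get these, use right-continuity in the first variable to pick $a'$ with $T(a',b)<T(a,b)+\varepsilon/2$. Then use right-continuity in the second variable at the point $(a',b)$ to pick $b'$ with $T(a',b')<T(a',b)+\varepsilon/2$. If $a'$ happens to equal $1$, the first step uses the identity value directly and nothing changes. By upper semi-continuity of $f$ and $g$ (Lemma \ref{finE:upper-semi-continuous}), eventually $f(u_n)<a'$ and $g(v_n)<b'$. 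Monotonicity of $T$ then gives $\phi(u_n,v_n)\leq T(a',b')<\phi(u_x,v_x)+\varepsilon$. Letting $n\to\infty$ and then $\varepsilon\to 0$ yields $s\leq\phi(u_x,v_x)$.

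The boundary cases where $a=1$ or $b=1$ need separate handling, since right-continuity is only stated on $[0,1[^2$. If $a=1$, then $\phi(u_n,v_n)\leq T(1,g(v_n))=g(v_n)$. Upper semi-continuity gives $\limsup g(v_n)\leq b=T(1,b)$, which settles the case. The case $b=1$ is symmetric.

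I expect the careful use of right-continuity at the boundary to be the only delicate point. The rest is routine compactness.
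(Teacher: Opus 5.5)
Your proof is correct and follows essentially the same route as the paper. Both arguments use the compactness of the fibre $K_x$ together with upper semi-continuity of $(u,v)\mapsto T(f(u),g(v))$, which comes from Lemma~\ref{finE:upper-semi-continuous} and the right-continuity of $T$; your maximizing-sequence argument is simply the Weierstrass theorem written out at the limit point, and you additionally make explicit two points the paper leaves tacit, namely the nonemptiness $(x,0)\in K_x$ and the boundary cases $f(u_x)=1$ or $g(v_x)=1$ where right-continuity is only defined on $[0,1[^2$.
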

\begin{proof}
Let $K_x = \{(u,v) \in [0,1]^2 \mid S(u,v) = x\}$. Since $S$ is a t-conorm, we have $S(u,v) \geq \max\,\{u,v\}$ by Lemma \ref{lem:max(x,y)-leq-S(x,y)}. Thus, any $(u,v) \in K_x$ must satisfy $u \leq x$ and $v \leq x$, implying that $K_x \subseteq [0,x]^2$. By the continuity of $S$, $K_x$ is a closed subset of $[0,1]^2$ and is therefore compact.

Next, we show that the composite function 
\[h\colon [0,1]^2\to[0,1],\quad h(u,v) := T(f(u),g(v))\]
is upper semi-continuous. Let $(u_n, v_n)$ be an arbitrary sequence converging to $(u,v)$. By the upper semi-continuity of $f$ and $g$ (Lemma \ref{finE:upper-semi-continuous}), for any $\varepsilon > 0$, there exists an integer $N$ such that $f(u_n) < f(u) + \varepsilon$ and $g(v_n) < g(v) + \varepsilon$ for all $n \geq N$. The monotonicity of the t-norm $T$ then yields
\[
    T(f(u_n), g(v_n)) \leq T(\min\{f(u)+\varepsilon, 1\}, \min\{g(v)+\varepsilon, 1\}).
\]
Taking the limit superior as $n\ra\infty$ on the left side we obtain
\[
    \limsup\limits_{n\ra\infty} T(f(u_n), g(v_n)) \leq T(\min\,\{f(u)+\varepsilon, 1\}, \min\,\{g(v)+\varepsilon, 1\}).
\]
Crucially, since $T$ is right-continuous, taking the limit as $\varepsilon \ra 0^+$ on the right side gives
\[
    \limsup\limits_{n\ra\infty} T(f(u_n), g(v_n)) \leq T(f(u), g(v)).
\]
This confirms that $h$ is upper semi-continuous.

Finally, by the generalized extreme value theorem (Weierstrass theorem), an upper semi-continuous function is guaranteed to attain its maximum on a compact set. Therefore, there exists a point $(u_x, v_x) \in K_x$ such that
\[
    \sup\limits_{S(u,v)=x} T(f(u),g(v)) = \max_{(u,v) \in K_x} h(u,v) = T(f(u_x),g(v_x)).
\]
This completes the proof.
\end{proof}

\begin{prop} \label{prop:binary_operation}
Given a continuous t-conorm $S$ on $[0,1]$ and a right-continuous t-norm $T$ on $[0,1]$, $\otimes_{_{S,T}}$ is a binary operation on $\mathbb{E}$.
\end{prop}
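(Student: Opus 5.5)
Since Proposition~\ref{prop:tensorST_decreasing} already gives that $h:=f\otimes_{_{S,T}}g$ is decreasing with $h(0)=1$, the only thing left to show is that $h$ is left-continuous on $]0,1]$. Fix $x\in]0,1]$ and a sequence $x_n\uparrow x$ with $x_n<x$. Because $h$ is decreasing, $h(x_n)$ decreases to some limit $\ell\geq h(x)$. So the plan is to prove the reverse inequality $\ell\leq h(x)$. The idea is to realize each $h(x_n)$ as an actual value of $T(f(\cdot),g(\cdot))$ and then pass to a limit using compactness and upper semi-continuity.

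First, by Lemma~\ref{lem:supremum_attained} (which applies because $S$ is a continuous t-conorm and $T$ is a right-continuous t-norm), I pick for each $n$ a pair $(u_n,v_n)$ with
\[S(u_n,v_n)=x_n,\qquad h(x_n)=T(f(u_n),g(v_n)).\]
By compactness of $[0,1]^2$ I pass to a subsequence with $(u_n,v_n)\to(u,v)$. Continuity of $S$ (Lemma~\ref{lem:s1_implies_cont}) then gives $S(u,v)=\lim x_n=x$, so $(u,v)$ is admissible in the supremum defining $h(x)$.

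Next I use the upper semi-continuity of $(u,v)\mapsto T(f(u),g(v))$, which was established inside the proof of Lemma~\ref{lem:supremum_attained} from Lemma~\ref{finE:upper-semi-continuous} and the right-continuity of $T$. It yields
\[\ell=\lim_n h(x_n)=\limsup_n T(f(u_n),g(v_n))\leq T(f(u),g(v))\leq h(x).\]
Hence $\ell=h(x)$, which is left-continuity at $x$. Therefore $h\in\mathbb{E}$, and $\otimes_{_{S,T}}$ maps $\mathbb{E}\times\mathbb{E}$ into $\mathbb{E}$.

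The main obstacle is this limit step. A priori, a maximizing pair could drift as $x_n\to x$, and the values $f(u_n)$ could jump at a discontinuity of $f$. Upper semi-continuity is exactly the property that controls such jumps in the right direction, since it only allows the limiting value to be larger, not smaller. That is also why right-continuity of $T$, rather than left-continuity, is the hypothesis needed here. I will restate the upper semi-continuity claim explicitly rather than only citing it from inside the earlier proof, so that the argument is self-contained.
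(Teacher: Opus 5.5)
Your proof is correct, but it handles the limit step by a different route from the paper.

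The paper argues by contradiction. It fixes a gap $\eta>0$ along a strictly increasing sequence $x_n\uparrow x^*$ and takes maximizing pairs $(u_n,v_n)$ from Lemma~\ref{lem:supremum_attained}. It then extracts a subsequence along which both coordinates are monotone, and splits into three cases:
\begin{enumerate}[label=(\roman*)]
\item both coordinates increasing, handled by left-continuity of $f,g$ and right-continuity of $T$;
\item both decreasing, ruled out by monotonicity of $S$, since it would force $x_{n_k}\geq x^*$;
\item mixed, handled by monotonicity of one function, left-continuity of the other, and right-continuity of $T$ in one variable.
\end{enumerate}

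You instead invoke, once, the joint upper semi-continuity of $(u,v)\mapsto T(f(u),g(v))$ on $[0,1]^2$. The paper already proves this inside Lemma~\ref{lem:supremum_attained}, from Lemma~\ref{finE:upper-semi-continuous} and the right-continuity of $T$. With it, the limit step is the single inequality $\limsup_n T(f(u_n),g(v_n))\leq T(f(u),g(v))\leq h(x)$. You need neither monotone subsequences nor the monotonicity of $S$ to exclude the ``both decreasing'' configuration, because upper semi-continuity controls approach from every direction at once.

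The paper's case analysis is an unpacked, direction-by-direction form of that same semi-continuity. Its advantage is that it shows exactly where left-continuity of $f,g$ and right-continuity of $T$ are used. Your version is shorter and reuses a fact already proved. It also carries over to the $\mathbb{E}'$ setting of Lemma~\ref{lem:tconorm_on_Eprime}: there all relevant fibres lie in $[0,x]^2$ with $x<1$, where functions in $\mathbb{E}'$ are still upper semi-continuous.
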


\begin{proof} 
Given $f,g \in \mathbb{E}$, we need to show that $f \otimes_{_{S,T}} g \in \mathbb{E}$. First, by Proposition \ref{prop:tensorST_decreasing}, $f \otimes_{_{S,T}} g$ is decreasing and $(f \otimes_{_{S,T}} g)(0) = 1$. It remains to show that $f \otimes_{_{S,T}} g$ is left-continuous on $]0,1]$.

Suppose, for the sake of contradiction, that $f \otimes_{_{S,T}} g$ is not left-continuous at some $x^* \in ]0,1]$. Because $f \otimes_{_{S,T}} g$ is decreasing, there must exist $\eta > 0$ and a strictly increasing sequence $(x_n)_{n\in\bbN}$ converging to $x^*$ such that, for all $n \in \bbN$,
\[
    (f \otimes_{_{S,T}} g)(x^*) \leq (f \otimes_{_{S,T}} g)(x_n) - \eta.
\]
By Lemma \ref{lem:supremum_attained}, for each $n \in \bbN$, there is a pair $(u_n, v_n)$ with $S(u_n, v_n) = x_n$ such that $(f \otimes_{_{S,T}} g)(x_n) = T(f(u_n), g(v_n))$. Hence, for all $n\in \bbN$,
\begin{equation}\label{eq:f_tensor_g_bound}
     (f \otimes_{_{S,T}} g)(x^*) \leq T(f(u_n), g(v_n)) - \eta.
\end{equation}

Since $x_n < x^* \leq 1$ and $\max\{u_n, v_n\} \leq S(u_n, v_n) = x_n$, the sequences $(u_n)_{n\in\bbN}$ and $(v_n)_{n\in\bbN}$ are bounded within the compact square $[0, x^*]^2$. Thus, we can iteratively extract monotone subsequences $(u_{n_k})_{k\in\bbN}$ and $(v_{n_k})_{k\in\bbN}$ converging to $u^*$ and $v^*$, respectively\footnote{More explicitly, by compactness we first pass to a subsequence, still denoted by $(u_n,v_n)$, converging to some $(u^*,v^*)\in[0,x^*]^2$. By the monotone subsequence theorem, the sequence $(u_n)$ has a monotone subsequence. Passing to this subsequence, the corresponding sequence of second coordinates is still convergent to $v^*$; applying the monotone subsequence theorem once more, we obtain a further subsequence along which the second coordinate sequence is monotone. Along this final subsequence, both coordinate sequences are monotone and still converge to $u^*$ and $v^*$, respectively. As usual, we relabel this final subsequence as $(u_{n_k},v_{n_k})$.}. Since $S(u_{n_k}, v_{n_k}) = x_{n_k}$, the continuity of $S$ yields $S(u^*,v^*) = x^*$.

We now distinguish three cases based on the monotonicity of the subsequences:

\emph{Case 1}. Both sequences $(u_{n_k})_{k\in\bbN}$ and $(v_{n_k})_{k\in\bbN}$ increase to $u^*$ and $v^*$, respectively. Since $f$ and $g$ are decreasing and left-continuous, their values $f(u_{n_k})$ and $g(v_{n_k})$ decrease to $f(u^*)$ and $g(v^*)$, respectively. By the right-continuity of $T$, passing to the limit as $k \ra \infty$ in \eqref{eq:f_tensor_g_bound} yields
\begin{align*}
     (f \otimes_{_{S,T}} g)(x^*) 
     &\leq \lim\limits_{k\ra\infty} T(f(u_{n_k}), g(v_{n_k})) - \eta \\
     &= T(f(u^*), g(v^*)) - \eta \\
     &\leq \sup\limits_{S(u,v)=x^*} T(f(u),g(v)) - \eta \\
     &= (f \otimes_{_{S,T}} g)(x^*) - \eta,
\end{align*}
which implies $0 \leq -\eta$, a contradiction.

\emph{Case 2}. Both sequences $(u_{n_k})_{k\in\bbN}$ and $(v_{n_k})_{k\in\bbN}$ decrease to $u^*$ and $v^*$, respectively. If this were the case, the monotonicity of $S$ would imply $x_{n_k} = S(u_{n_k}, v_{n_k}) \geq S(u^*, v^*) = x^*$, which contradicts the fact that $x_{n_k} < x^*$.

\emph{Case 3}. Without loss of generality, assume that the sequence $(u_{n_k})_{k\in\bbN}$ increases to $u^*$ and the sequence $(v_{n_k})_{k\in\bbN}$ decreases to $v^*$. Since $f$ is decreasing and left-continuous, $f(u_{n_k})$ decreases to $f(u^*)$. Since $g$ is decreasing and $v_{n_k} \geq v^*$, we have $g(v_{n_k}) \leq g(v^*)$ for all $k$. Applying these facts to \eqref{eq:f_tensor_g_bound}, we obtain
\[
    (f \otimes_{_{S,T}} g)(x^*) \leq T(f(u_{n_k}), g(v_{n_k})) - \eta \leq T(f(u_{n_k}), g(v^*)) - \eta
\]
for all $k$. Taking the limit as $k \ra \infty$, the right-continuity of $T$ with respect to its first argument ensures that $T(f(u_{n_k}), g(v^*))$ converges to $T(f(u^*), g(v^*))$. Therefore,
\begin{align*}
    (f \otimes_{_{S,T}} g)(x^*) 
    &\leq \lim\limits_{k\ra\infty} T(f(u_{n_k}), g(v^*)) - \eta \\
    &= T(f(u^*), g(v^*)) - \eta \\
    &\leq \sup\limits_{S(u,v)=x^*} T(f(u),g(v)) - \eta \\
    &= (f \otimes_{_{S,T}} g)(x^*) - \eta,
\end{align*}
which again leads to a contradiction.

Since all possible cases lead to a contradiction, $f \otimes_{_{S,T}} g$ must be left-continuous. In conclusion, $\otimes_{_{S,T}}$ is a well-defined binary operation on $\mathbb{E}$.
\end{proof}

\begin{rem}
The monotone-subsequence argument in the proof of Proposition \ref{prop:binary_operation} closely parallels the argument used in \cite[Lemma~8.1]{SamingerPlatz2008}. The proof above is adapted to the decreasing, left-continuous functions in $\mathbb{E}$ and to the supremum convolution considered here.
\end{rem}

\begin{prop} \label{prop:is_tconorm}
   Given a continuous t-conorm $S$ on $[0,1]$ and a right-continuous t-norm $T$ on $[0,1]$, $\otimes_{_{S,T}}$ is a t-conorm on $\mathbb{E}$.
\end{prop}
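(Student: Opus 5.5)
By Proposition \ref{prop:binary_operation}, $\otimes_{_{S,T}}$ is already a binary operation on $\mathbb{E}$. By Proposition \ref{prop:tensorST_decreasing}, it is increasing in each place. It therefore remains to verify three things: commutativity, that $\omega_0$ is the identity, and associativity.

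\emph{Commutativity.} Since $S$ and $T$ are both commutative, the substitution $(u,v)\mapsto(v,u)$ maps the constraint set $\{S(u,v)=x\}$ onto itself. It also exchanges $T(f(u),g(v))$ with $T(g(v),f(u))$. Hence $f\otimes_{_{S,T}} g=g\otimes_{_{S,T}} f$.

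\emph{Identity.} Fix $x\in[0,1]$ and compute $(f\otimes_{_{S,T}}\omega_0)(x)$. Terms with $v>0$ contribute $T(f(u),0)\leq T(1,0)=0$. Terms with $v=0$ force $S(u,0)=u=x$ and contribute $T(f(x),1)=f(x)$. The supremum is therefore $f(x)$, which gives $f\otimes_{_{S,T}}\omega_0=f$.

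\emph{Associativity.} The plan is to show that both bracketings equal the symmetric triple convolution
\[
\sup_{S(S(u,v),w)=x} T(T(f(u),g(v)),h(w)).
\]
This expression is symmetric because $S$ and $T$ are associative.

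For $((f\otimes_{_{S,T}} g)\otimes_{_{S,T}} h)(x)$, the definition gives $\sup_{S(y,w)=x}T\big(\sup_{S(u,v)=y}T(f(u),g(v)),h(w)\big)$. The task is to move the inner supremum out of the first argument of $T$. This is where I expect the main obstacle, because $T$ is only right-continuous and so need not preserve suprema.

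I will resolve it with Lemma \ref{lem:supremum_attained}. For each $y$, the inner supremum is attained at some pair $(u_y,v_y)$ with $S(u_y,v_y)=y$. The inner supremum is thus a maximum, and $T(\max_i a_i,b)=\max_i T(a_i,b)$ holds simply by monotonicity. Hence
\[
((f\otimes_{_{S,T}} g)\otimes_{_{S,T}} h)(x)=\sup_{S(y,w)=x}\ \sup_{S(u,v)=y} T(T(f(u),g(v)),h(w)),
\]
and merging the iterated suprema over $(u,v,w)$ with $S(S(u,v),w)=x$ yields the triple expression.

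The same argument applies to $f\otimes_{_{S,T}}(g\otimes_{_{S,T}} h)$, this time using attainment in the second argument. It yields the supremum over $S(u,S(v,w))=x$ of $T(f(u),T(g(v),h(w)))$. By associativity of $S$ and $T$, this is the same triple expression, so the two bracketings agree.

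As a fallback, one could avoid attainment: for each $\varepsilon>0$, pick near-optimal pairs and use right-continuity of $T$ to approximate from above. I expect the exact attainment argument to suffice, however, so the fallback should not be needed.
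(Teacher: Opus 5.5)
Your proposal is correct and follows essentially the same route as the paper: closure and monotonicity come from Propositions \ref{prop:binary_operation} and \ref{prop:tensorST_decreasing}, identity and commutativity are handled by the same direct computations, and associativity identifies both bracketings with the triple supremum over $S(S(u,v),w)=x$, with Lemma \ref{lem:supremum_attained} compensating for the fact that $T$ need not preserve suprema. The only difference is minor: the paper proves ``$\geq$'' by monotonicity and ``$\leq$'' by attainment of both the outer and the inner supremum, whereas you get equality for each fixed outer pair from attainment of the inner supremum alone, which is a slightly leaner form of the same argument.
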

\begin{proof}
    By Proposition \ref{prop:binary_operation}, $\otimes_{_{S,T}}$ is a well-defined binary operation on $\mathbb{E}$. Given $f\in \mathbb{E}$ and $x\in [0,1]$, we evaluate its identity element:
 \[
 (f \otimes_{_{S,T}} \omega_0)(x) = \sup\limits_{S(u,v)=x}T(f(u),\omega_0(v)).
 \]
 For any $v > 0$, we have $\omega_0(v)=0$, which implies $T(f(u),\omega_0(v)) = T(f(u), 0) = 0$ (because $0$ is the annihilator of the t-norm $T$). Since the supremum is bounded below by $0$, we only need to consider the case where $v=0$. Since $S(u,0) = u$, the condition $S(u,v)=x$ forces $u=x$. Thus, we have 
 \[
 (f \otimes_{_{S,T}} \omega_0)(x) = T(f(x), \omega_0(0)) = T(f(x), 1) = f(x).
 \]
 Hence, $f \otimes_{_{S,T}} \omega_0 = f$. The commutativity of $\otimes_{_{S,T}}$ follows immediately from the commutativity of $S$ and $T$, which also implies $\omega_0 \otimes_{_{S,T}} f = f$. Therefore, $\omega_0$ is the identity of $\otimes_{_{S,T}}$. Furthermore, recall from Proposition \ref{prop:tensorST_decreasing} that $\otimes_{_{S,T}}$ is increasing in each place. 
 
 It remains to show that $\otimes_{_{S,T}}$ is associative. Given $f, g, h \in \mathbb{E}$, we need to prove that
 \[
     (f \otimes_{_{S,T}} g) \otimes_{_{S,T}} h = f \otimes_{_{S,T}} (g \otimes_{_{S,T}} h).
 \]
 Since $\otimes_{_{S,T}}$ is a binary operation on $\mathbb{E}$, both sides of the equation are functions in $\mathbb{E}$, meaning they both evaluate to $1$ at $x=0$. For $x \in ]0,1]$, the left-hand side is given by
\[
     ((f \otimes_{_{S,T}} g) \otimes_{_{S,T}} h)(x) 
     = 
     \sup\limits_{S(u,v) = x} T((f \otimes_{_{S,T}} g)(u), h(v))
     = \sup\limits_{S(u,v) = x} T\left(\sup\limits_{S(r,s) = u}T(f(r),g(s)), h(v)\right).
\]
 Our goal is to verify the following equality:
 \[
    \sup\limits_{S(u,v) = x} T\left(\sup\limits_{S(r,s) = u}T(f(r),g(s)), h(v)\right) = \sup\limits_{S(S(a,b),c) = x}T(T(f(a),g(b)),h(c)).
 \]
 First, we establish the inequality ``$\geq$''. For any $a,b,c\in [0,1]$ satisfying $S(S(a,b),c) = x$, let $u = S(a,b)$ and $v = c$. Then $S(u,v)=x$, and by the definition of the supremum, we have
 \[
    \sup\limits_{S(r,s) = u}T(f(r),g(s)) \geq T(f(a), g(b)).
 \]
 By the monotonicity of $T$, it follows that
 \[
    T\left(\sup\limits_{S(r,s) = u}T(f(r),g(s)), h(v)\right) \geq T(T(f(a),g(b)),h(c)).
 \]
 Taking the supremum over all such triplets $(a,b,c)$ yields
 \[
    \sup\limits_{S(u,v) = x} T\left(\sup\limits_{S(r,s) = u}T(f(r),g(s)), h(v)\right) \geq \sup\limits_{S(S(a,b),c) = x}T(T(f(a),g(b)),h(c)).
 \]
 
 Next, we establish the reverse inequality ``$\leq$''. By Lemma \ref{lem:supremum_attained}, there exist $u_0, v_0 \in [0,1]$ with $S(u_0, v_0) = x$ such that
 \[
    ((f \otimes_{_{S,T}} g) \otimes_{_{S,T}} h)(x) = T((f \otimes_{_{S,T}} g)(u_0), h(v_0)).
 \]
 Applying Lemma \ref{lem:supremum_attained} again to the inner operation, there exist $r_0, s_0 \in [0,1]$ with $S(r_0, s_0) = u_0$ such that
 \[
    (f \otimes_{_{S,T}} g)(u_0) = T(f(r_0), g(s_0)).
 \]
 Thus, we have $S(S(r_0, s_0), v_0) = x$, which allows us to deduce
 \[
     ((f \otimes_{_{S,T}} g) \otimes_{_{S,T}} h)(x) 
     = 
     T(T(f(r_0), g(s_0)), h(v_0))
     \leq 
     \sup\limits_{S(S(a,b),c) = x}T(T(f(a),g(b)),h(c)).
 \]
 Therefore, we conclude that for all $x\in [0,1]$,
 \[
    ((f \otimes_{_{S,T}} g) \otimes_{_{S,T}} h)(x) = \sup\limits_{S(S(a,b),c) = x}T(T(f(a),g(b)),h(c)).
 \]
 A completely symmetric argument shows that the right-hand side of the associativity equation is
 \[
    (f \otimes_{_{S,T}} (g \otimes_{_{S,T}} h))(x) = \sup\limits_{S(a,S(b,c)) = x}T(f(a),T(g(b),h(c))).
 \]
 By the associativity of $S$ and $T$, the bounds of the suprema and the inner expressions are identical, meaning the right-hand sides of these two expressions are equal. Consequently,
 \[
    (f \otimes_{_{S,T}} g) \otimes_{_{S,T}} h = f \otimes_{_{S,T}} (g \otimes_{_{S,T}} h).
 \]
 This concludes the proof.
\end{proof}

\begin{prop} \label{prop:s_is_continuous_tconorm}
Given $T \in \CT$ and $S \in \CS$, if $\otimes_{_{S,T}}$ is a t-conorm on $\mathbb{E}$, then $S$ is a continuous t-conorm.
\end{prop}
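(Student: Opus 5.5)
The plan is to test the t-conorm axioms of $\otimes_{_{S,T}}$ on the step functions $\omega_r$, which turn the convolution into a computation about $S$ alone. The first step is to compute $\omega_a\otimes_{_{S,T}}\omega_b$ for $a,b\in[0,1]$. We have $T(\omega_a(u),\omega_b(v))\in\{0,1\}$, and it equals $1$ exactly when $u\leq a$ and $v\leq b$, because $T(1,1)=1$ while $T(0,1)=T(1,0)=T(0,0)=0$ by \ref{T1}--\ref{T2}. Hence
\[
(\omega_a\otimes_{_{S,T}}\omega_b)(x)=1 \iff x\in S([0,a]\times[0,b]),
\]
and the value is $0$ otherwise. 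Since $\omega_a\otimes_{_{S,T}}\omega_b$ must lie in $\mathbb{E}$ and take only the values $0,1$, it equals $\omega_c$ for $c=\sup S([0,a]\times[0,b])$. It also attains the value $1$ at $c$, so this supremum is a maximum, and the image is the down-set $[0,c]$.

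The second step is to extract the identity. From $\omega_a\otimes_{_{S,T}}\omega_0=\omega_a$ we get $S([0,a]\times\{0\})=[0,a]$ for every $a$. In particular $S(0,0)=0$, and $S(a,0)\leq a$ by monotonicity. Taking $a<1$, we see that $a$ must be attained as $S(u,0)$ for some $u\leq a$, so $S(a,0)\geq S(u,0)=a$, giving $S(a,0)=a$. For $a=1$, every $a'<1$ gives $S(1,0)\geq S(a',0)=a'$, so $S(1,0)=1$. The symmetric argument gives $S(0,b)=b$, so $0$ is an identity of $S$, and Lemma \ref{lem:s1_implies_cont} then makes $S$ continuous on all of $[0,1]^2$. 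Continuity and monotonicity now give $\max S([0,a]\times[0,b])=S(a,b)$, so
\[
\omega_a\otimes_{_{S,T}}\omega_b=\omega_{S(a,b)}.
\]

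The third step transfers commutativity and associativity. Commutativity of $\otimes_{_{S,T}}$ gives $\omega_{S(a,b)}=\omega_{S(b,a)}$. The map $r\mapsto\omega_r$ is injective, so $S(a,b)=S(b,a)$. Associativity gives $\omega_{S(S(a,b),c)}=\omega_{S(a,S(b,c))}$, hence $S$ is associative. Monotonicity of $S$ is \ref{S2}, so $S$ is a continuous t-conorm.

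The main obstacle is the first step: justifying that the supremum defining $\omega_a\otimes_{_{S,T}}\omega_b$ is attained, and that the resulting $\{0,1\}$-valued function is exactly some $\omega_c$. Both facts come only from membership in $\mathbb{E}$, which is guaranteed because $\otimes_{_{S,T}}$ is assumed to be a binary operation on $\mathbb{E}$; left-continuity is what forces the value $1$ at the endpoint $c$. One must also handle the boundary values $a=1$ or $b=1$, where \ref{S3} gives no continuity. This is why continuity is invoked only after the identity has been established, at which point Lemma \ref{lem:s1_implies_cont} covers the boundary.
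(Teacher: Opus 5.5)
Your proof is correct and follows essentially the paper's route: compute $\omega_a\otimes_{_{S,T}}\omega_b=\omega_{S(a,b)}$, transfer the identity, commutativity and associativity laws to $S$ through the injective map $r\mapsto\omega_r$, and obtain continuity from Lemma \ref{lem:s1_implies_cont}. The detour in your first two steps is unnecessary, and the ``main obstacle'' you flag is not a real one. By \ref{S2} alone, $S(a,b)=\max S([0,a]\times[0,b])$ is attained at $(a,b)$. The paper therefore gets $\omega_a\otimes_{_{S,T}}\omega_b=\omega_{S(a,b)}$ at once, using only that the result is decreasing. It needs no left-continuity, no prior identity law, and no continuity of $S$, and the identity law then reads off directly from $\omega_v=\omega_0\otimes_{_{S,T}}\omega_v=\omega_{S(0,v)}$.
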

\begin{proof} 
Our primary strategy is to show that for all $u,v \in [0,1]$, 
\[
    {\omega_u} \otimes_{_{S,T}} {\omega_v} = \omega_{S(u,v)}.
\]
We evaluate the function $\omega_u \otimes_{_{S,T}} \omega_v$ point by point. First, consider the point $x = S(u,v)$. Since the pair $(u,v)$ trivially satisfies the supremum condition $S(r,s) = S(u,v)$, we have
\[
    (\omega_u \otimes_{_{S,T}} \omega_v)(S(u,v)) \geq T(\omega_u(u), \omega_v(v)) = T(1,1) = 1.
\]
Because the codomain is $[0,1]$, it follows that $(\omega_u \otimes_{_{S,T}} \omega_v)(S(u,v)) = 1$. 

Next, since $\otimes_{_{S,T}}$ is assumed to be a t-conorm on $\mathbb{E}$, the function $\omega_u \otimes_{_{S,T}} \omega_v$ belongs to $\mathbb{E}$ and is therefore decreasing. Hence, for all $x \in [0, S(u,v)]$,
\[
    (\omega_u \otimes_{_{S,T}} \omega_v)(x) \geq (\omega_u \otimes_{_{S,T}} \omega_v)(S(u,v)) = 1,
\]
which implies $(\omega_u \otimes_{_{S,T}} \omega_v)(x) = 1$.

For $x > S(u,v)$, any pair $(r,s)$ satisfying $S(r,s) = x$ must have either $r > u$ or $s > v$. Indeed, if both $r \leq u$ and $s \leq v$ held, the monotonicity of $S$ would imply $S(r,s) \leq S(u,v) < x$, a contradiction. Thus, for any such pair $(r,s)$, we have either $\omega_u(r) = 0$ or $\omega_v(s) = 0$. Since $T \in \CT$ is increasing and has $1$ as its identity, we know $T(1,0) = T(0,1) = 0$. By monotonicity, $T(a,b) = 0$ whenever $a=0$ or $b=0$. Consequently, for all $x > S(u,v)$, the supremum is taken over a set of zeros, yielding
\[
    (\omega_u \otimes_{_{S,T}} \omega_v)(x) = 0.
\]

Combining these cases, the function is exactly $1$ on $[0, S(u,v)]$ and $0$ on $]S(u,v), 1]$; this also covers the endpoint cases $S(u,v)=0$ and $S(u,v)=1$. Thus
\[
    \omega_u \otimes_{_{S,T}} \omega_v = \omega_{S(u,v)}.
\]
 
With this identity established, the properties of $S$ follow directly from those of $\otimes_{_{S,T}}$. Because $\omega_0$ is the identity of the t-conorm $\otimes_{_{S,T}}$, we have for all $v \in [0,1]$,
\[
    \omega_v = \omega_0 \otimes_{_{S,T}} \omega_v = \omega_{S(0,v)},
\]
which implies $S(0,v) = v$, meaning $0$ is the identity of $S$. The commutativity and associativity of $\otimes_{_{S,T}}$ similarly enforce that $\omega_{S(u,v)} = \omega_{S(v,u)}$ and $\omega_{S(S(u,v),w)} = \omega_{S(u,S(v,w))}$, which imply that $S$ is commutative and associative. 

Since $S \in \CS$ is already known to be increasing and surjective, it satisfies all the axioms of a t-conorm on $[0,1]$. Finally, by Lemma \ref{lem:s1_implies_cont}, any t-conorm belonging to $\CS$ is necessarily continuous on the entire domain $[0,1]^2$.
\end{proof}

\begin{prop}\label{prop:t_is_tnorm}
Given $T \in \CT$ and $S \in \CS$, if $\otimes_{_{S,T}}$ is a t-conorm on $\mathbb{E}$ and $S \neq \max$, then $T$ is a t-norm on $[0,1]$.
\end{prop}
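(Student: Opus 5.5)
The plan is to probe $T$ with step functions in $\mathbb{E}$ supported near a non-idempotent point of $S$. There $S$ behaves like ordinary addition, so the supremum in the convolution reduces to a single term $T(p,q)$. Monotonicity of $T$ and identity $1$ are given by $T\in\CT$, so only commutativity and associativity need proof.

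\emph{Step 1: choose a point where $S$ is cancellative.} By Proposition \ref{prop:s_is_continuous_tconorm}, $S$ is a continuous t-conorm. Since $S\neq\max$, it has a non-idempotent element $x_0\in]0,1[$. By Proposition \ref{ordinal_sum}, $([a_{x_0},b_{x_0}],S)$ is isomorphic to $([0,\infty],+)$ or to $([0,1],\oplus)$. Write $a=a_{x_0}$, $b=b_{x_0}$, and choose $u\in]a,b[$ so close to $a$ that $S(S(u,u),u)<b$ (in the $\oplus$ model, three copies of $u$ sum to less than $1$).

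On the region of $[a,b]^2$ where $S(r,s)<b$, $S$ is strictly increasing in each variable. Hence for $r,s\le u$ (or $r\le S(u,u)$, $s\le u$) with $r,s\in[a,u]$, the equation $S(r,s)=S(u,u)$ forces $r=s=u$. Also $S(u,u)>u$, and $S(S(u,u),u)>S(u,u)$.

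\emph{Step 2: test functions.} For $p\in[0,1]$, define $f_p\in\mathbb{E}$ by $f_p(0)=1$, $f_p=p$ on $]0,u]$ and $f_p=0$ on $]u,1]$. Compute $(f_p\otimes_{_{S,T}} f_q)(x)$ for $x\in]S(u,u)... $ more precisely at $x=S(u,u)$. Pairs $(r,s)$ with $r>u$ or $s>u$ contribute $0$, as in the proof of Proposition \ref{prop:s_is_continuous_tconorm}. A pair with $r=0$ needs $s=x>u$, so it also contributes $0$; the same holds for $s=0$.

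For $0<r,s\le u$, we would like $r,s\ge a$. This needs care, since elements below $a$ interact with $[a,b]$ via $S$. We handle it as follows: if $r<a$ then $S(r,s)\le S(a,u)$, and because $a$ is idempotent we expect $S(a,u)=\max(a,u)=u<x$ in the ordinal-sum structure. So we may restrict to $r,s\in[a,u]$, where cancellativity gives $r=s=u$. Hence $(f_p\otimes_{_{S,T}} f_q)(S(u,u))=T(p,q)$, and the same computation gives $(f_q\otimes_{_{S,T}} f_p)(S(u,u))=T(q,p)$. Commutativity of $\otimes_{_{S,T}}$ then yields $T(p,q)=T(q,p)$.

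\emph{Step 3: associativity.} The same argument shows $f_p\otimes_{_{S,T}} f_q$ vanishes on $]S(u,u),1]$. Now evaluate $((f_p\otimes f_q)\otimes f_r)(x)$ at $x=S(S(u,u),u)$.
\begin{itemize}
\item Pairs $(y,z)$ with $z>u$ or $y>S(u,u)$ contribute $0$.
\item Pairs with $y=0$ or $z=0$ contribute $0$, since then the other coordinate exceeds its support.
\item The remaining pairs are forced by cancellativity (again after discarding coordinates below $a$) to be $y=S(u,u)$, $z=u$.
\end{itemize}
This gives the value $T(T(p,q),r)$. Symmetrically, $(f_p\otimes(f_q\otimes f_r))(x)=T(p,T(q,r))$. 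Associativity of $\otimes_{_{S,T}}$ therefore gives associativity of $T$, so $T$ is a t-norm.

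\emph{Expected obstacle.} The hardest part is the bookkeeping in Steps 2--3: showing rigorously that every competing pair in the supremum contributes $0$. This includes the coordinates lying below $a$, where one must use the ordinal-sum fact $S(r,s)=\max(r,s)$ when $r$ and $s$ lie in different summands. It also includes checking cancellativity precisely below the threshold $b$. I would first try to prove a small auxiliary claim: $S(r,s)=S(u,u)$ with $r,s\le u$ implies $r=s=u$. The three-fold version would then follow from the same claim applied with $S(u,u)$ in place of one argument.
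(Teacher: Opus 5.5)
Your argument is correct and is essentially the paper's proof. You use the same step functions (equal to $p,q,r$ on $]0,u]$, the paper's $c$), read off commutativity of $T$ at a point of $]u,S(u,u)]$, and read off associativity at a point of $]S(u,u),S(S(u,u),u)]$. The one difference is the bookkeeping you flagged as the obstacle. You pin each fibre point to $(u,u)$ or $(S(u,u),u)$ by cancellativity in the Archimedean block, using $S(a,t)=t$ for $t\in[a,b]$. The paper instead computes $f\otimes_{_{S,T}}g$ on all of $[0,1]$ and notes that it is constantly $T(p,q)$ on $]u,S(u,u)]$. Every fibre point in $]u,S(u,u)]\times]0,u]$ then gives $T(T(p,q),r)$, so no cancellativity is needed.
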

\begin{proof}
By Proposition \ref{prop:s_is_continuous_tconorm}, $S$ is a continuous t-conorm. Since $S \neq \max$, there exists a non-idempotent element $z \in ]0,1[$. By Proposition \ref{ordinal_sum}, the restricted monoid $([a_z,b_z],S)$ is isomorphic to either $([0,\infty],+)$ or $([0,1],{\oplus})$. Because these Archimedean components allow elements to be strictly increased without immediately reaching the upper bound, we can choose $c \in ]a_z, b_z[$ sufficiently close to $a_z$ such that $S(S(c,c),c) < b_z$. Since $a_z$ is the identity of this restricted monoid and $c$ is non-idempotent, it naturally holds that $c < S(c,c) < S(S(c,c),c) < b_z \leq 1$.

Given arbitrary $p,q,r \in [0,1]$, we define three functions in $\mathbb{E}$:
\[f(x)= \begin{cases} 1, & x=0, \\ p, & 0 < x \leq c, \\ 0, & c < x \leq 1, \end{cases} \quad g(x)= \begin{cases} 1, & x=0, \\ q, & 0 < x \leq c, \\ 0, & c < x \leq 1, \end{cases} \quad h(x)= \begin{cases} 1, & x=0, \\ r, & 0 < x \leq c, \\ 0, & c < x \leq 1. \end{cases}\]
First, we investigate the point values of the function $f \otimes_{_{S,T}} g$.

It is clear that $(f \otimes_{_{S,T}} g)(0)=1$ since $\otimes_{_{S,T}}$ is a well-defined t-conorm on $\mathbb{E}$. For $0 < x \leq c$, the condition $\max\{u, v \} \leq S(u,v) = x$ implies that $u \leq x \leq c$ and $v \leq x \leq c$. Since $S(0,0) = 0 < x$, we must have either $u > 0$ or $v > 0$. Hence, the possible values of $T(f(u),g(v))$ are restricted to $T(1,q)$, $T(p,1)$, and $T(p,q)$. Since $T \in \CT$ has $1$ as its identity by definition, $T(1,q)=q$ and $T(p,1)=p$. Taking into account the monotonicity of $T$, one concludes that
\begin{align*}
(f \otimes_{_{S,T}} g)(x) &= \sup\limits_{S(u,v) = x} T(f(u),g(v))\\ 
&= \max \, \{ T(1,q) , T(p,1) , T(p,q) \} \\ 
&= \max \, \{ q, p , T(p,q) \} \\ 
&= \max \, \{p,q\}. 
\end{align*}

For $c < x \leq S(c,c)$, if $u=0$, then $v=x>c$, which implies $g(v)=0$ and hence $T(f(u),g(v))=T(1,0)=0$. Symmetrically, if $v=0$, then $T(f(u),g(v))=0$. By the continuity of $S$ and the fact that $S(0,c) = c < x \leq S(c,c)$, there must exist $u_0, v_0 \in ]0,c]$ such that $S(u_0,v_0)=x$. For these strictly positive values bounded by $c$, we have $T(f(u_0),g(v_0))=T(p,q)$. Thus,
\[(f \otimes_{_{S,T}} g)(x) = \sup\limits_{S(u,v) = x} T(f(u),g(v)) = T(f(u_0),g(v_0)) = T(p,q).\]

For $x = S(u,v) > S(c,c)$, the monotonicity of $S$ dictates that either $u > c$ or $v > c$. For any such pair $(u,v)$, we inevitably have $f(u)=0$ or $g(v)=0$, either of which forces $T(f(u),g(v))=0$. Consequently, $(f \otimes_{_{S,T}} g)(x)=0$. 

In summary, the composite function is perfectly determined:
\[(f \otimes_{_{S,T}} g)(x) = \begin{cases} 1, & x = 0, \\ \max \, \{p,q\}, & 0 < x \leq c, \\ T(p,q), & c < x \leq S(c,c),\\ 0, & x > S(c,c). \end{cases}\]
By symmetry, interchanging $f$ and $g$ yields:
\[(g\otimes_{_{S,T}} f)(x) = \begin{cases} 1, & x = 0, \\ \max\,\{q,p\}, & 0 < x \leq c, \\ T(q,p), & c < x \leq S(c,c),\\ 0, & x > S(c,c). \end{cases}\]

Since $\otimes_{_{S,T}}$ is assumed to be a t-conorm, it is commutative, meaning $f \otimes_{_{S,T}} g = g \otimes_{_{S,T}} f$. Evaluating both functions on the non-empty interval $]c, S(c,c)]$, we immediately deduce that $T(p,q) = T(q,p)$. 

To prove the associativity of $T$, we compare
\[
((f\otimes_{_{S,T}}g)\otimes_{_{S,T}}h)(x)
\quad\text{and}\quad
(f\otimes_{_{S,T}}(g\otimes_{_{S,T}}h))(x).
\]
Because we explicitly ensured $S(c,c) < S(S(c,c),c)$, the interval $]S(c,c), S(S(c,c),c)]$ is strictly non-empty. Let $x$ belong to this interval. For $((f\otimes_{_{S,T}}g)\otimes_{_{S,T}}h)(x)$, if a pair $(u,v)$ with $S(u,v)=x$ satisfied $u\leq c$ and $v\leq c$, then $S(u,v)\leq S(c,c)<x$, a contradiction. If $v>c$, then $h(v)=0$, while if $u>S(c,c)$, then $(f\otimes_{_{S,T}}g)(u)=0$ by the formula above. Moreover, if $v=0$ and $u\leq S(c,c)$, then $S(u,v)=u\leq S(c,c)<x$, which is impossible. Hence a nonzero contribution is possible only when $c<u\leq S(c,c)$ and $0<v\leq c$, where
\[
(f\otimes_{_{S,T}}g)(u)=T(p,q)
\quad\text{and}\quad
h(v)=r.
\]
Such pairs exist: since
\[
S(S(c,c),0)=S(c,c)<x\leq S(S(c,c),c),
\]
the continuity of $v\mapsto S(S(c,c),v)$ yields some $v\in]0,c]$ with $S(S(c,c),v)=x$. All remaining pairs yield zero, so
\[
((f\otimes_{_{S,T}}g)\otimes_{_{S,T}}h)(x)=T(T(p,q),r).
\]
The same argument, with the first and second inner convolutions interchanged, gives
\[
(f\otimes_{_{S,T}}(g\otimes_{_{S,T}}h))(x)=T(p,T(q,r)).
\]
The associativity of $\otimes_{_{S,T}}$ therefore yields $T(T(p,q),r)=T(p,T(q,r))$. Given the arbitrariness of $p, q, r \in [0,1]$, we conclude that $T$ is both commutative and associative. Coupled with its inherent identity and monotonicity from $\CT$, $T$ is a well-defined t-norm on $[0,1]$.
\end{proof}

\begin{prop}\label{prop:t_is_rightcont}
Given $T \in \CT$ and $S \in \CS$, if $\otimes_{_{S,T}}$ is a t-conorm on $\mathbb{E}$ and $S \neq \max$, then $T$ is right-continuous.
\end{prop}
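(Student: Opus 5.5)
The plan is to read off right-continuity of $T$ from the left-continuity of $f\otimes_{_{S,T}}g\in\mathbb{E}$, for suitably chosen test functions $f,g$. By Proposition \ref{prop:t_is_tnorm}, $T$ is a t-norm, hence commutative. It therefore suffices to prove right-continuity in the first variable, i.e.\ $T(p,q)=\inf\{T(p',q)\mid p'>p\}$ for all $p\in[0,1[$ and $q\in[0,1]$. By Proposition \ref{prop:s_is_continuous_tconorm}, $S$ is a continuous t-conorm.

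\textbf{Choice of $c$.} As in the proof of Proposition \ref{prop:t_is_tnorm}, pick a non-idempotent $z$ and $c\in]a_z,b_z[$ with $c<S(c,c)<b_z$. By Proposition \ref{ordinal_sum}, $S$ restricted to $[a_z,b_z]$ is isomorphic to $+$ or $\oplus$. Since $S(c,c)<b_z$ lies below the top of the component, $S$ is strictly increasing there. Together with $S(u,c)=c$ for $u\leq a_z$ (ordinal-sum structure), this gives
\[
S(u,c)<S(c,c)\quad\text{for all } u<c .
\]

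\textbf{Test functions.} Fix $p<1$ and $q$. Define
\[
f(0)=1,\qquad f(x)=p+(1-p)\frac{c-x}{c}\ \text{ on } ]0,c],\qquad f=0 \ \text{ on } ]c,1],
\]
and let $g$ be the step function equal to $1$ at $0$, to $q$ on $]0,c]$ and to $0$ on $]c,1]$. Both lie in $\mathbb{E}$, and $f$ is strictly decreasing and continuous on $]0,c]$ with $f(c)=p$.

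\textbf{Computing the convolution on $]c,S(c,c)]$.} Take $x\in]c,S(c,c)]$. Pairs with $u=0$ force $v=x>c$, so $g(v)=0$. Pairs with $v=0$ force $u=x>c$, so $f(u)=0$. Pairs with $u>c$ or $v>c$ also contribute $0$. Hence only pairs $(u,v)\in]0,c]^2$ with $S(u,v)=x$ matter, each contributing $T(f(u),q)$. Since $f$ is decreasing, the supremum is governed by the smallest admissible $u$. Set $u(x)=\min\{u\in[0,c]\mid S(u,c)\geq x\}$; this exists by continuity. By the intermediate value theorem, every $u\in[u(x),c]$ admits some $v\leq c$ with $S(u,v)=x$, while no smaller $u$ does. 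Thus
\[
(f\otimes_{_{S,T}}g)(x)=T(f(u(x)),q).
\]
Moreover $u(x)>0$ because $S(0,c)=c<x$.

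\textbf{The endpoint and the limit.} At $x^*=S(c,c)$, the strictness above forces $u(x^*)=c$, so $(f\otimes_{_{S,T}}g)(x^*)=T(p,q)$. Take $x_n\uparrow x^*$ with $x_n>c$. Continuity of $S$ gives $u(x_n)\to c$ with $u(x_n)<c$, hence $f(u(x_n))\downarrow p$ with $f(u(x_n))>p$. Left-continuity of $f\otimes_{_{S,T}}g$ at $x^*$ then yields $T(p,q)=\lim_n T(f(u(x_n)),q)$. Since $T$ is monotone and $f(u(x_n))$ ranges over values $>p$ tending to $p$, this limit equals $\inf_{p'>p}T(p',q)$, which is the required right-continuity.

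\textbf{Main obstacle.} The delicate step is the exact identification of $u(x)$, in particular $u(S(c,c))=c$. This needs strict monotonicity of $S(\cdot,c)$ near $c$, which is exactly why $c$ must be taken inside an Archimedean component, below its top, using Proposition \ref{ordinal_sum}.
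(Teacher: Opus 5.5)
Your proof is correct and is essentially the paper's argument. You choose $c$ inside an Archimedean component, use strict monotonicity of $S$ there to force the value of $f\otimes_{_{S,T}}g$ at $S(c,c)$ to be $T$ of the endpoint values, and then invoke left-continuity of $f\otimes_{_{S,T}}g$ at $S(c,c)$. The only difference is that the paper makes both test functions strictly decreasing and uses the diagonal points $S(t_x,t_x)=x$ as a lower bound. This gives the joint inequality $T(x_0,y_0)\geq\inf\{T(\alpha,\beta)\mid\alpha>x_0,\beta>y_0\}$ in one step, without commutativity of $T$. Your step function $g$ yields an exact formula, but it makes you pass through Proposition~\ref{prop:t_is_tnorm} and the separate-versus-joint right-continuity equivalence.
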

\begin{proof}
Since $T$ is increasing in each place, it suffices to establish that
\[T(x_0, y_0) \geq \inf\,\{ T(x, y) \mid x > x_0,\ y > y_0 \}\]
for any given $x_0, y_0 \in [0,1[$.

Since $S$ is a continuous t-conorm and $S \neq \max$, there exists a non-idempotent element $z \in ]0,1[$. By Proposition \ref{ordinal_sum}, the restricted monoid $([a_z,b_z],S)$ is isomorphic to either $([0,\infty],+)$ or $([0,1],{\oplus})$. We can choose $c \in ]a_z, b_z[$ close enough to $a_z$ such that $S(c,c) < b_z$. Because both standard addition $+$ and truncated addition $\oplus$ are strictly increasing on intervals where their sums do not hit the upper bound, the operation $S$ is strictly increasing on the restricted domain $[a_z, c]^2$.

Now, we define two functions $f, g \in \mathbb{E}$ as follows:
\[f(x) = \begin{cases} 
1, & x=0,\\
x_0 + (1-x_0)\frac{c-x}{c}, & 0 < x \leq c,\\
0, & c < x \leq 1, 
\end{cases}
\quad
g(x) = \begin{cases} 
1, & x=0,\\
y_0 + (1-y_0)\frac{c-x}{c}, & 0 < x \leq c,\\
0, & c < x \leq 1. 
\end{cases}\]
It is routine to verify that $f, g \in \mathbb{E}$ since they are decreasing, satisfy $f(0)=g(0)=1$, and are left-continuous on $]0,1]$.

Consider the value of $f \otimes_{_{S,T}} g$ at the specific point $x = S(c,c)$. For any pair $(u,v)$ satisfying $S(u,v) = S(c,c)$, if $u > c$ or $v > c$, then $f(u)=0$ or $g(v)=0$, either of which yields $T(f(u), g(v)) = 0$. Hence, to find the supremum, we only need to consider the region where $u \leq c$ and $v \leq c$. For such $u, v$:
\begin{itemize}
\item If $u \leq a_z$, since $a_z$ is the identity of the restricted monoid, we have $S(u,v) \leq S(a_z,c) = c < S(c,c)$. Similarly, if $v\leq a_z$, then $S(u,v)\leq S(c,a_z)=c<S(c,c)$.
\item If $a_z < u \leq c$ and $a_z < v \leq c$, the strict monotonicity of $S$ on $[a_z,c]^2$ guarantees that $S(u,v) = S(c,c)$ if and only if $u=c$ and $v=c$.
\end{itemize}
Thus $(c,c)$ is the only point of the fibre $S^{-1}(S(c,c))$ contained in $[0,c]^2$. Every other point of this fibre has $u>c$ or $v>c$ and hence contributes $0$. Consequently,
\[(f \otimes_{_{S,T}} g)(S(c,c)) = \sup\limits_{S(u,v) = S(c,c)} T(f(u), g(v)) = T(f(c),g(c)) = T(x_0,y_0).\]

Next, let $x\in]0,S(c,c)[$. To ensure strict inequalities, we observe the diagonal map $t \mapsto S(t,t)$. Since $S$ is continuous, $S(0,0)=0$, and $x < S(c,c)$, the intermediate value theorem guarantees the existence of some $t_x \in ]0, c[$ such that $S(t_x, t_x) = x$. Setting $u_x = v_x = t_x$, we strictly have $u_x < c$ and $v_x < c$. For these values, $f(u_x) > x_0$ and $g(v_x) > y_0$ hold due to the strict monotonicity of $f$ and $g$ on $]0,c]$. It follows that:
\[(f \otimes_{_{S,T}} g)(x) = \sup\limits_{S(u,v)=x} T(f(u),g(v)) \geq T(f(u_x),g(v_x)) \geq \inf \, \{ T(\alpha, \beta) \mid \alpha > x_0, \beta > y_0 \}.\]

Crucially, since $\otimes_{_{S,T}}$ is a t-conorm on $\mathbb{E}$, the resulting function $f \otimes_{_{S,T}} g$ must belong to $\mathbb{E}$ and is therefore left-continuous at $S(c,c)$. Taking the limit as $x$ approaches $S(c,c)$ from the left ($x \to S(c,c)^-$), we obtain:
\[(f \otimes_{_{S,T}} g)(S(c,c)) = \lim\limits_{x \ra S(c,c)^-} (f \otimes_{_{S,T}} g)(x) \geq \inf \, \{ T(\alpha, \beta) \mid \alpha > x_0, \beta > y_0 \}.\]
Substituting the point value calculated earlier, we conclude:
\[T(x_0,y_0) \geq \inf \, \{ T(\alpha, \beta) \mid \alpha > x_0, \beta > y_0 \}.\]
This explicitly demonstrates that $T$ is right-continuous in the sense defined above.
\end{proof}

Summarizing the preceding results, we arrive at the central characterization of this paper:

\begin{thm} \label{thm:tensor_tconorm_characterization} 
Given $T \in \CT$ and $S \in \CS$, the map $\otimes_{_{S,T}}$ is a t-conorm on $\mathbb{E}$ if and only if the following conditions hold:
\begin{enumerate}[label={\rm(\alph*)}]
\item \label{thm:tensor_tconorm_characterization:S} $S$ is a continuous t-conorm;
\item \label{thm:tensor_tconorm_characterization:T} $T$ is a right-continuous t-norm on $[0,1]$ if $S \neq \max$.
\end{enumerate}
\end{thm}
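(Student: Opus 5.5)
The theorem is essentially an assembly of the preceding propositions, plus one extra verification for the degenerate case $S=\max$. I will prove the two directions separately.

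For necessity, assume $\otimes_{_{S,T}}$ is a t-conorm on $\mathbb{E}$. Proposition \ref{prop:s_is_continuous_tconorm} gives condition (a) directly. If $S\neq\max$, Proposition \ref{prop:t_is_tnorm} shows that $T$ is a t-norm. Proposition \ref{prop:t_is_rightcont} then shows it is right-continuous, which gives (b). No further work is needed in this direction.

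For sufficiency, assume (a) and (b), and split into two cases. If $S\neq\max$, then $T$ is a right-continuous t-norm and $S$ is a continuous t-conorm, so Proposition \ref{prop:is_tconorm} applies verbatim.

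The remaining case is $S=\max$ with $T\in\CT$ arbitrary; this is the only genuinely new step. My plan is to show that $f\otimes_{_{\max,T}} g=\max\{f,g\}$ pointwise for all $f,g\in\mathbb{E}$. Fix $x$. The pairs $(x,0)$ and $(0,x)$ lie in the fibre $\max(u,v)=x$, and $T$ has $1$ as identity. These give the values $T(f(x),1)=f(x)$ and $T(1,g(x))=g(x)$, so the supremum is at least $\max\{f(x),g(x)\}$.

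Conversely, take any $(u,v)$ with $\max(u,v)=x$, and say $u=x$. Since $g$ is decreasing and $v\leq x$, we have $g(v)\leq 1$, and monotonicity of $T$ gives
\[
T(f(x),g(v))\leq T(f(x),1)=f(x).
\]
The case $v=x$ is symmetric and gives the bound $g(x)$. Hence the supremum equals $\max\{f(x),g(x)\}$.

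It then remains to check that pointwise maximum is a t-conorm on $\mathbb{E}$. First, $\max\{f,g\}\in\mathbb{E}$: it takes the value $1$ at $0$, it is decreasing, and a maximum of two left-continuous functions is left-continuous. Commutativity, associativity and monotonicity of $\max$ are clear. Finally, $\omega_0$ is the identity, since $\max\{f,\omega_0\}=f$ uses only $f(0)=1$ and $\omega_0=0$ elsewhere.

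I expect no real obstacle, since the heavy lifting (well-definedness, associativity via attained suprema, and the necessity constructions) is already done in the preceding propositions. The only point needing care is the $S=\max$ case. There $T$ need not be commutative, associative or continuous, so the argument must use only the axioms (T1) and (T2) of $\CT$, and the collapse to pointwise maximum must be shown without invoking Proposition \ref{prop:is_tconorm}.
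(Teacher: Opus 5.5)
Your proposal is correct and matches the paper's proof step for step. Necessity comes from Propositions \ref{prop:s_is_continuous_tconorm}, \ref{prop:t_is_tnorm} and \ref{prop:t_is_rightcont}; sufficiency for $S\neq\max$ comes from Proposition \ref{prop:is_tconorm}; and for $S=\max$ you show the collapse $f\otimes_{_{\max,T}}g=\max\{f,g\}$ using only (T1) and (T2), which is a slightly more explicit two-sided version of the paper's one-line computation. (The appeal to $g$ being decreasing there is unnecessary, since $g(v)\leq 1$ holds trivially.)
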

\begin{proof}
If $S\neq\max$, the sufficiency of the conditions follows directly from Propositions \ref{prop:binary_operation} and \ref{prop:is_tconorm}. If $S=\max$, then for any $T \in \CT$, given $f,g \in \mathbb{E}$ and $x\in [0, 1]$,
\begin{align*} 
(f \otimes_{_{\max,T}} g)(x) &= \sup\limits_{\max \{u,v \} = x}T (f(u), g(v))\\ 
&= \max \, \{ T (f(0), g(x)), T (f(x), g(0)) \} & (f,g\ \text{are decreasing})\\ 
&= \max \, \{ f(x), g(x) \}. 
\end{align*}
Since the pointwise maximum operation preserves the decreasing and left-continuous properties of $\mathbb{E}$ and satisfies all axioms of a t-conorm with $\omega_0$ as its neutral element, $\otimes_{_{\max,T}}$ is a t-conorm on $\mathbb{E}$. Thus the conditions are sufficient in all cases.

Conversely, the necessity of condition \ref{thm:tensor_tconorm_characterization:S} is established in Proposition \ref{prop:s_is_continuous_tconorm}, and the necessity of condition \ref{thm:tensor_tconorm_characterization:T} is collectively provided by Propositions \ref{prop:t_is_tnorm} and \ref{prop:t_is_rightcont}.
\end{proof}

\section{Application: Resolving the Schweizer-Sklar open problem} \label{Application-Schweizer-Sklar}

In this section, we provide a complete answer to the open problem posed by Schweizer and Sklar regarding the triangle function $\tauTSL$ on $\Delta^+$. We show that the properties of $\tauTSL$ on $\Delta^+$ can be characterized through operations on the tensor product space, using the framework established in Section \ref{T-conorms-on-E} while carefully accounting for the specific boundary behavior of distance distribution functions.

A \emph{distance distribution function} \cite{Schweizer1983} is a map 
\[\phi\colon [0,\infty]\to[0,1]\] 
such that 
\begin{itemize}
\item $\phi(0)=0$, $\phi(\infty)=1$, 
\item $\phi$ is increasing, and 
\item $\phi$ is left-continuous on $]0,\infty[$.
\end{itemize}
Let $\Delta^+$ denote the set of all distance distribution functions. The partial order on $\Delta^+$ is defined pointwise.

For the purpose of defining the metric, every $f\in\Delta^+$ is extended to $]-\infty,0[$ by setting $f(x)=0$ for all $x<0$.

Let $f,g \in \Delta^+$, $t \in ]0,1]$, and let $(f,g;t)$ denote the condition 
\[
f(x-t)-t \leq g(x) \leq f(x+t)+t, \quad  \forall x \in \Big[0,\frac{1}{t}\Big[.
\]
The \emph{modified L\'{e}vy metric} \cite{Schweizer1983} is the function $d_L$ defined on $\Delta^+ \times \Delta^+$ by 
\[
d_L(f,g)= \inf \, \{\,t \mid \text{both } (f,g;t) \text{ and } (g,f;t) \text{ hold}\}.
\]
Here the subscript in $d_L$ refers to L\'{e}vy and is unrelated to the operation $L$ introduced below.
A sequence $\{f_n\}$ in $\Delta^+$ is said to \emph{converge weakly} to $f \in \Delta^+$ (denoted by $f_n \xrightarrow{w} f$) if the sequence $\{f_n\}$ converges to $f$ at each continuity point $x$ of $f$.

\begin{prop} (See \cite{Schweizer1983}.) \label{prop:weakly_converge}
    Let $\{f_n\}$ be a sequence of functions in $\Delta^+$, and let $f \in \Delta^+$. Then $f_n \xrightarrow{w} f$ if and only if $d_L(f_n, f) \rightarrow 0$.
\end{prop}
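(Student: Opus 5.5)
The proof is the standard one for the L\'evy metric. It rests on two facts: a monotone function has at most countably many discontinuities, so the continuity points of $f$ are dense in $]0,\infty[$; and every function in $\Delta^+$ is increasing. I will also use that $(f,g;t)$ is monotone in $t$: if it holds for $t$, it holds for every $t'\in[t,1]$. Indeed, for $x\in[0,1/t'[\subseteq[0,1/t[$ we have $f(x-t')-t'\leq f(x-t)-t\leq g(x)\leq f(x+t)+t\leq f(x+t')+t'$. Hence $d_L(f_n,f)<t$ implies that both $(f_n,f;t)$ and $(f,f_n;t)$ hold.

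\emph{($\Leftarrow$)} Assume $d_L(f_n,f)\to 0$ and let $x$ be a continuity point of $f$.
\begin{enumerate}
\item If $x=0$ or $x=\infty$, then $f_n(x)=f(x)$ for all $n$, so there is nothing to prove.
\item If $x\in]0,\infty[$, fix $t\in]0,1]$ with $x<1/t$. For $n$ large we have $d_L(f_n,f)<t$, so $(f,f_n;t)$ holds and gives
\[
f(x-t)-t\leq f_n(x)\leq f(x+t)+t .
\]
\item Taking $\liminf$ and $\limsup$ in $n$, and then letting $t\to0^+$, continuity of $f$ at $x$ gives $f_n(x)\to f(x)$.
\end{enumerate}

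\emph{($\Rightarrow$)} Assume $f_n\xrightarrow{w}f$ and fix $t\in]0,1]$. The goal is to show that $(f_n,f;t)$ and $(f,f_n;t)$ both hold for all sufficiently large $n$; this gives $d_L(f_n,f)\leq t$ eventually.
\begin{enumerate}
\item By density, choose finitely many continuity points $0<y_1<\dots<y_m$ of $f$ such that $y_1<t$, $y_m>1/t$, and $y_{i+1}-y_i<t$ for each $i$.
\item Choose $N$ so that $|f_n(y_i)-f(y_i)|<t$ for all $i$ and all $n\geq N$.
\item Given $x\in[0,1/t[$, pick grid points with $x-t<y_i\leq x\leq y_{j}<x+t$. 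If $x<y_1$, the left bound is vacuous because then $x-t<0$ and $f(x-t)=0$.
\item Monotonicity of $f_n$ and $f$ then gives
\[
f_n(x)\leq f_n(y_j)\leq f(y_j)+t\leq f(x+t)+t,
\qquad
f_n(x)\geq f_n(y_i)\geq f(y_i)-t\geq f(x-t)-t .
\]
\item The symmetric inequalities, with the roles of $f$ and $f_n$ exchanged, follow in the same way.
\end{enumerate}
Since $t$ was arbitrary, $d_L(f_n,f)\to0$.

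The main obstacle is the bookkeeping in the ($\Rightarrow$) direction. The grid must reach past $1/t$ and come within $t$ of $0$, with mesh below $t$, and it must be chosen from continuity points only; this is exactly where density of the continuity points is needed. The endpoints $0$ and $\infty$ need a little care but cause no real difficulty, since the prescribed values $\phi(0)=0$ and $\phi(\infty)=1$ and the convention $f(x)=0$ for $x<0$ make the inequalities automatic there.
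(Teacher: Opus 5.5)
Your proof is correct. The monotonicity of $(f,g;t)$ in $t$, the two-sided squeeze in the ($\Leftarrow$) direction, and the finite grid of continuity points in the ($\Rightarrow$) direction all hold up. The grid conditions $y_1<t$, $y_m>1/t$ and mesh below $t$ are exactly what guarantee grid points $y_i\in\,]x-t,x]$ and $y_j\in[x,x+t[$ for every $x\in[0,1/t[$, except in the vacuous case $x<y_1$.

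The paper gives no proof of its own here and simply quotes the result from Schweizer and Sklar. Your argument is the standard one behind that citation, so it supplies the omitted proof rather than taking a different route.
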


A \emph{triangle function} \cite{Schweizer1983} $\tau$ is a binary operation on $\Delta^+$ that is commutative, associative, increasing in each place and has $\vep_0$ as its identity, where $\vep_0(0)=0$ and $\vep_0(x)=1$ for $x > 0$.

The class $\CT^*$ is the set of all binary operations $T^*$ on $[0,1]$ that are increasing in each place and have $0$ as identity. The class $\CL$ is the set of all binary operations $L$ on $[0,\infty]$ that satisfy the following conditions:
\begin{enumerate}[label=(L\arabic*)]
\item $L$ is surjective, 
\item $L$ is increasing in each place, and 
\item $L$ is continuous except possibly at the points $(0,\infty)$ and $(\infty,0)$.
\end{enumerate}
 
For any $T^* \in \CT^*$ and $L \in \CL$, $\tauTSL$ is the map on $\Delta^+ \times \Delta^+$ whose value, for any $F, G \in \Delta^+$, is the function $\tauTSL(F, G)$ defined on $[0,\infty]$ by
\[\tauTSL(F, G)(x)=\inf_{L(u,v)=x}T^*(F(u), G(v)).\]

Schweizer and Sklar posed the following open problem (see \cite[Problem 7.9.6]{Schweizer1983}).

\begin{prob}
Find conditions on $T$ and $L$ that are both necessary and sufficient for $\tauTL$ to be a triangle function. Similarly, find necessary and sufficient conditions on $T$ and $L$ for $\tauTL$ to be continuous. Clearly, the same problem can be posed for $\tauTSL$ and $\rho_{_{C,L}}$.
\end{prob}

The part of this problem concerning $\tauTL$ was solved in \cite{Lai2026}. Now we investigate the corresponding problem for $\tauTSL$ and $\rho_{_{C,L}}$, where $C$ is a copula. As observed by Saminger-Platz and Sempi \cite[Section~9, Equation~(9.1)]{SamingerPlatz2008}, the operation $T^*$ defined by $T^*(a,b)=a+b-C(a,b)$ for $a,b\in[0,1]$ belongs to $\CT^*$ and satisfies $\rho_{_{C,L}}=\tau_{_{T^*,L}}$. Thus, it suffices to characterize all pairs $(T^*, L)$ for which $\tau_{_{T^*,L}}$ is a (continuous) triangle function.

To rigorously establish the connection between $\Delta^+$ and the tensor product $\mathbb{E}$, we must account for the boundary continuity at infinity. Since functions in $\Delta^+$ are only required to be left-continuous on $]0, \infty[$, their left limit as $x \ra \infty$ may be strictly less than $1$. To accommodate this, let $\mathbb{E}'$ denote the space of functions $g\colon [0, 1] \to [0, 1]$ such that 
\begin{itemize}
\item $g(0)=1$, $g(1)=0$, 
\item $g$ is decreasing, and
\item $g$ is left-continuous on the open interval $]0, 1[$.
\end{itemize}

Let $\varphi\colon [0, \infty] \to [0, 1]$ be the order-preserving homeomorphism given by $\varphi(x) = 1 - e^{-x}$ for $x \in [0, \infty[$ and $\varphi(\infty) = 1$. On the ambient function spaces, define
\[
\widehat{\Phi}\colon [0,1]^{[0,\infty]}\to [0,1]^{[0,1]},
\qquad \widehat{\Phi}(H)(x)=1-H(\varphi^{-1}(x)).
\]
Its restriction $\Phi=\widehat{\Phi}|_{\Delta^+}$ maps $\Delta^+$ to $\mathbb{E}'$. Explicitly, for any $F \in \Delta^+$,
\[ \Phi(F)(x) = 1 - F(\varphi^{-1}(x)) =\begin{cases}
 1 - F(-\ln(1-x)) & \text{if}\ x \in [0, 1[,\\
 0 & \text{if}\ x=1.
\end{cases}\]
Because $F \in \Delta^+$ is increasing and left-continuous on $]0, \infty[$, and $\varphi^{-1}$ is strictly increasing and continuous on $[0, 1[$, we see that $\Phi(F)$ is decreasing and left-continuous on $]0, 1[$. Furthermore, the boundary condition $F(0) = 0$ naturally ensures that $\Phi(F)(0) = 1$. Thus, $\Phi$ is an order-reversing isomorphism between $\Delta^+$ and $\mathbb{E}'$.

For any $T^* \in \CT^*$ and $L \in \CL$, we define a corresponding operation $S$ on $[0,1]$ by $S(a,b) = \varphi(L(\varphi^{-1}(a), \varphi^{-1}(b)))$, and a dual operation $T$ on $[0,1]$ by $T(a,b) = 1 - T^*(1-a, 1-b)$. It is easy to observe that $T \in \CT$ and $S \in \CS$ (indeed, the induced operation $S$ inherently possesses even stronger boundary continuity than minimally required by the class $\CS$). Direct evaluation in the ambient function spaces gives
\[ \widehat{\Phi}(\tauTSL(F, G))(x) = (\Phi(F) \otimes_{_{S,T}} \Phi(G))(x). \]
Since $\Phi$ is an order-reversing isomorphism between $\Delta^+$ and $\mathbb{E}'$, this identity shows that $\tauTSL$ is a triangle function on $\Delp$ if and only if the induced operation $\otimes_{_{S,T}}$ is a t-conorm on $\mathbb{E}'$.

\begin{lem} \label{lem:tconorm_on_Eprime}
Let $S$ be a continuous t-conorm on $[0,1]$ and let $T$ be a right-continuous t-norm on $[0,1]$. If
\[
S(a,b)=1 \implies a=1\ \text{or}\ b=1,
\]
then $\otimes_{_{S,T}}$ is a t-conorm on $\mathbb{E}'$.
\end{lem}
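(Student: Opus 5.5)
The plan is to reduce as much as possible to the $\mathbb{E}$-case already treated. The only differences between $\mathbb{E}'$ and $\mathbb{E}$ are at the endpoint $1$: a function $g\in\mathbb{E}'$ must satisfy $g(1)=0$, and it need not be left-continuous at $1$. Accordingly, I would carry out the following steps.

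\begin{enumerate}[label=(\roman*)]
\item Show that $f\otimes_{_{S,T}} g\in\mathbb{E}'$ for $f,g\in\mathbb{E}'$.
\item Verify that the identity of $\mathbb{E}'$ is the identity of the operation.
\item Establish commutativity, monotonicity and associativity.
\end{enumerate}

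For step (i), Proposition \ref{prop:tensorST_decreasing} applies verbatim, since its proof uses only that $f,g$ are decreasing with value $1$ at $0$. Hence $f\otimes_{_{S,T}} g$ is decreasing with value $1$ at $0$, and $\otimes_{_{S,T}}$ is increasing in each place. For the value at $1$, I would use the hypothesis on $S$: if $S(u,v)=1$, then $u=1$ or $v=1$. Thus $f(u)=0$ or $g(v)=0$, so $T(f(u),g(v))=0$ and $(f\otimes_{_{S,T}} g)(1)=0$.

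For left-continuity on $]0,1[$, I would rerun the argument of Proposition \ref{prop:binary_operation} at a point $x^*\in]0,1[$. This requires checking that Lemma \ref{lem:supremum_attained} still holds for $f,g\in\mathbb{E}'$ at points $x<1$. Its proof used upper semi-continuity of $f,g$ (Lemma \ref{finE:upper-semi-continuous}) only along sequences in $K_x\subseteq[0,x]^2\subseteq[0,1[^2$. Functions in $\mathbb{E}'$ are decreasing and left-continuous on $]0,1[$, so they are upper semi-continuous on $[0,1[$, and this suffices. For $x=1$ the supremum is $0$ and is attained at $(1,0)$, so it is attained there too.

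In the three-case argument, all the $u_n,v_n,u^*,v^*$ lie in $[0,x^*]\subseteq[0,1[$. The left-continuity of $f,g$ used there is therefore only on $]0,1[$, which $\mathbb{E}'$ provides. Left-continuity at $1$ is not required in $\mathbb{E}'$.

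For step (ii), the bottom element of $\mathbb{E}'$ is still $\omega_0$ (which lies in $\mathbb{E}'$), and the computation in Proposition \ref{prop:is_tconorm} gives $f\otimes_{_{S,T}}\omega_0=f$ unchanged. For step (iii), commutativity is immediate from the commutativity of $S$ and $T$. For associativity I would follow the proof of Proposition \ref{prop:is_tconorm}, which needs only that the suprema are attained. By the extended Lemma \ref{lem:supremum_attained}, they are attained for every $x\in[0,1]$ and every inner argument $u_0\in[0,1]$, since at the value $1$ the supremum is trivially attained. The inner function $f\otimes_{_{S,T}} g$ lies in $\mathbb{E}'$ by step (i), so the lemma may be applied to it. Both iterated convolutions then equal the triple supremum over $S(S(a,b),c)=x$, and associativity of $S$ and $T$ gives equality.

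The main obstacle I expect is the careful check that the attainment lemma and the left-continuity argument never use behaviour at the point $1$, where functions in $\mathbb{E}'$ may jump. I would handle this by noting that for $x<1$ the relevant fibres lie in $[0,x]^2$, away from the coordinate value $1$. The case $x=1$ is disposed of separately by the no-zero-divisor-type hypothesis on $S$, which forces the value $0$ there.
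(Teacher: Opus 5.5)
Your proposal is correct and follows essentially the paper's route. You reuse Proposition \ref{prop:tensorST_decreasing}, get the value $0$ at $1$ from the hypothesis on $S$, and rerun the attainment and three-case left-continuity arguments inside $[0,x^*]^2\subseteq[0,1[^2$, exactly as the paper does. The only cosmetic difference is at the endpoint $x=1$: you note that the supremum is trivially attained there, so the identity and associativity computations run uniformly, whereas the paper observes that both sides lie in $\mathbb{E}'$ and therefore vanish at $1$.
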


\begin{proof}
Let $f,g\in\mathbb{E}'$. The arguments in Proposition \ref{prop:tensorST_decreasing} show that $f\otimes_{_{S,T}}g$ is decreasing and has value $1$ at $0$. The displayed condition gives
\[
(f\otimes_{_{S,T}}g)(1)=0,
\]
because every pair $(a,b)$ with $S(a,b)=1$ has $a=1$ or $b=1$, and a t-norm has $0$ as an annihilator.

It remains to verify left-continuity on $]0,1[$. Fix $x^*\in]0,1[$. Every fibre $K_x=\{(u,v)\mid S(u,v)=x\}$ with $x\leq x^*$ is contained in $[0,x^*]^2$. On this square, the functions $f$ and $g$ have exactly the same decreasing and left-continuous behaviour used in Lemma \ref{lem:supremum_attained} and Proposition \ref{prop:binary_operation}. Hence the proof of Proposition \ref{prop:binary_operation}, restricted to $[0,x^*]^2$, shows that $f\otimes_{_{S,T}}g$ is left-continuous at $x^*$. Thus $f\otimes_{_{S,T}}g\in\mathbb{E}'$.

Finally, $\omega_0\in\mathbb{E}'$. The identity and monotonicity computations in Propositions \ref{prop:tensorST_decreasing} and \ref{prop:is_tconorm} apply on $[0,1[$ without change. At $x=1$, both $f\otimes_{_{S,T}}\omega_0$ and $\omega_0\otimes_{_{S,T}}f$ have value $0=f(1)$ because they belong to $\mathbb{E}'$, so the identity law also holds at the endpoint. The associativity calculation applies on $[0,1[$; for any $f,g,h\in\mathbb{E}'$, at $x=1$ both sides are zero because $f\otimes_{_{S,T}}g$ and $g\otimes_{_{S,T}}h$ belong to $\mathbb{E}'$. Therefore $\otimes_{_{S,T}}$ is a t-conorm on $\mathbb{E}'$.
\end{proof}

\begin{thm} \label{thm:triangle_function_characterization} 
Given $T^* \in \CT^*$ and $L \in \CL$, the map $\tauTSL$ is a triangle function on $\Delta^+$ if and only if the following conditions hold:
\begin{enumerate}[label={\rm(\alph*)}]
\item \label{thm:triangle_function_characterization:L} $L$ is a continuous t-conorm on $[0, \infty]$ that has no zero divisor, in the sense that $L(u,v) = \infty$ implies $u = \infty$ or $v = \infty$;
\item \label{thm:triangle_function_characterization:T*} $T^*$ is a left-continuous t-conorm on $[0,1]$ if $L \neq \max$.
\end{enumerate}
\end{thm}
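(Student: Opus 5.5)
The plan is to transfer everything through the order-reversing isomorphism $\Phi\colon\Delta^+\to\mathbb{E}'$ and the conjugations $S(a,b)=\varphi(L(\varphi^{-1}(a),\varphi^{-1}(b)))$, $T(a,b)=1-T^*(1-a,1-b)$. By the identity $\widehat{\Phi}(\tauTSL(F,G))=\Phi(F)\otimes_{_{S,T}}\Phi(G)$ recorded above, $\tauTSL$ is a triangle function on $\Delp$ if and only if $\otimes_{_{S,T}}$ is a t-conorm on $\mathbb{E}'$. Under these conjugations the properties correspond exactly:
\begin{itemize}
\item $L$ is a continuous t-conorm on $[0,\infty]$ if and only if $S$ is a continuous t-conorm on $[0,1]$;
\item $L$ has no zero divisor if and only if $S(a,b)=1$ forces $a=1$ or $b=1$;
\item $L=\max$ if and only if $S=\max$;
\item $T^*$ is a left-continuous t-conorm if and only if $T$ is a right-continuous t-norm, since $x\mapsto 1-x$ reverses order and swaps left and right limits.
\end{itemize}
So it suffices to prove that $\otimes_{_{S,T}}$ is a t-conorm on $\mathbb{E}'$ if and only if $S$ is a continuous t-conorm with the no-zero-divisor property and, when $S\neq\max$, $T$ is a right-continuous t-norm.

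\emph{Sufficiency.} If $S\neq\max$, Lemma \ref{lem:tconorm_on_Eprime} applies directly. If $S=\max$, the computation in the proof of Theorem \ref{thm:tensor_tconorm_characterization} shows $f\otimes_{_{\max,T}}g=\max\{f,g\}$ pointwise for any $T\in\CT$. That argument used only that $f,g$ are decreasing with $f(0)=g(0)=1$, so it holds on $\mathbb{E}'$. The pointwise maximum clearly preserves $\mathbb{E}'$, since it keeps the value $0$ at $1$, and it is a t-conorm there with identity $\omega_0$. Note that $\max$ has no zero divisor.

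\emph{Necessity.} I will rerun the proofs of Propositions \ref{prop:s_is_continuous_tconorm}, \ref{prop:t_is_tnorm} and \ref{prop:t_is_rightcont} inside $\mathbb{E}'$. The point to watch is that $\omega_r\in\mathbb{E}'$ only for $r<1$, because $\omega_1(1)=1$. So the identity $\omega_u\otimes_{_{S,T}}\omega_v=\omega_{S(u,v)}$ is derived only for $u,v\in[0,1[$. For such $u,v$, if $S(u,v)=1$ the left side is a member of $\mathbb{E}'$ equal to $1$ at $1$, which is a contradiction. This gives the no-zero-divisor property, and hence $S(u,v)<1$ on $[0,1[^2$.

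The identity law, commutativity and associativity of $S$ then follow on $[0,1[$. Extending them to arguments equal to $1$ is the main obstacle. I would first show $S(a,1)=1$ for all $a$. Surjectivity of $S$, together with $S<1$ on $[0,1[^2$, gives some point with $S(a,1)=1$ or $S(1,b)=1$. I then hope to combine monotonicity with the identity $S(0,v)=v$ for $v<1$ to get $S(0,1)=1$, and hence $S(a,1)=1$ for every $a$. If this direct route fails, a fallback is to work on the $L$ side, where the continuity of $L$ away from $(0,\infty)$ and $(\infty,0)$ gives $L(0,\infty)\geq\lim_{v\to\infty}L(0,v)=\infty$. Once $S(a,1)=S(1,a)=1$ is known, the three laws hold trivially whenever an argument equals $1$. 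Lemma \ref{lem:s1_implies_cont} then gives continuity of $S$ on all of $[0,1]^2$.

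For the conditions on $T$, the functions $f,g,h$ built in Propositions \ref{prop:t_is_tnorm} and \ref{prop:t_is_rightcont} vanish on $]c,1]$ with $c<1$, so they lie in $\mathbb{E}'$. The test points used there are $S(c,c)$ and $S(S(c,c),c)$, which are both less than $1$, so the left-continuity of the products in $\mathbb{E}'$ on $]0,1[$ is exactly what those arguments need. Running them verbatim shows that $T$ is a right-continuous t-norm whenever $S\neq\max$. Translating back through $\Phi$ yields conditions \ref{thm:triangle_function_characterization:L} and \ref{thm:triangle_function_characterization:T*}.
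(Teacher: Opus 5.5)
Your proposal is correct and follows essentially the same route as the paper: you transport via $\Phi$, derive (B) and the t-conorm laws for $S$ on $[0,1[$ from $\omega_u\otimes_{_{S,T}}\omega_v=\omega_{S(u,v)}$ with $u,v<1$, rerun Propositions \ref{prop:t_is_tnorm} and \ref{prop:t_is_rightcont} with all test points below $1$, and use Lemma \ref{lem:tconorm_on_Eprime} together with the pointwise-maximum case for sufficiency. The step you flag as the main obstacle is handled exactly as in the paper and is immediate: $S(0,1)\geq S(0,v)=v$ for every $v<1$ forces $S(0,1)=1$, and symmetrically $S(1,0)=1$, so your surjectivity detour and the $L$-side fallback are unnecessary.
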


\begin{proof}
Under the isomorphism $\Phi$, $\tauTSL$ is a triangle function on $\Delta^+$ if and only if $\otimes_{_{S,T}}$ is a well-defined t-conorm on $\mathbb{E}'$.

Suppose first that $\otimes_{_{S,T}}$ is a t-conorm on $\mathbb{E}'$. If $S(a_0,b_0)=1$ for some $a_0,b_0<1$, then $\omega_{a_0},\omega_{b_0}\in\mathbb{E}'$, whereas
\[
(\omega_{a_0}\otimes_{_{S,T}}\omega_{b_0})(1)\geq T(1,1)=1.
\]
This contradicts the defining boundary condition at $1$. Hence
\[
S(a,b)=1 \implies a=1\ \text{or}\ b=1. \tag{B}
\]
For $u,v<1$, the functions $\omega_u,\omega_v$ belong to $\mathbb{E}'$, and (B) gives $S(u,v)<1$. The proof of Proposition \ref{prop:s_is_continuous_tconorm} therefore applies to these test functions and yields
\[
\omega_u\otimes_{_{S,T}}\omega_v=\omega_{S(u,v)}.
\]
The identity, commutativity, and associativity of $\otimes_{_{S,T}}$ imply the corresponding laws for $S$ on $[0,1[$. Monotonicity then extends the identity law to the endpoint and gives $S(1,t)=S(t,1)=1$ for every $t\in[0,1]$. Commutativity at an endpoint follows from these absorbing laws, and if any argument in the associativity law equals $1$, then both sides equal $1$; hence commutativity and associativity hold on all of $[0,1]$. Thus $S$ is a t-conorm. Since $S\in\CS$, Lemma \ref{lem:s1_implies_cont} shows that it is continuous on $[0,1]^2$.

If $S\neq\max$, the test functions used in Propositions \ref{prop:t_is_tnorm} and \ref{prop:t_is_rightcont} belong to $\mathbb{E}'$; moreover, every left-continuity argument in those proofs occurs at a point strictly below $1$. Applying the same calculations in $\mathbb{E}'$ shows that $T$ is a right-continuous t-norm. Under the homeomorphism $\varphi$ and the duality between $T$ and $T^*$, these conclusions say precisely that $L$ is a continuous t-conorm, that (B) is equivalent to the absence of zero divisors for $L$, and that $T^*$ is a left-continuous t-conorm when $L\neq\max$.

Conversely, assume conditions \ref{thm:triangle_function_characterization:L} and \ref{thm:triangle_function_characterization:T*}. If $L=\max$, then
\[
\tauTSL(F,G)(x)=\min\{F(x),G(x)\}
\]
for all $F,G\in\Delta^+$ and all $x\in[0,\infty]$, so $\tauTSL$ is a triangle function. Assume $L\neq\max$. The induced operation $S$ is a continuous t-conorm satisfying (B), and the dual operation $T$ is a right-continuous t-norm. Lemma \ref{lem:tconorm_on_Eprime} shows that $\otimes_{_{S,T}}$ is a t-conorm on $\mathbb{E}'$. Transporting this conclusion along $\Phi$ proves that $\tauTSL$ is a triangle function.
\end{proof}

\begin{thm} \label{thm:continuous_triangle_function_characterization}
Given $T^* \in \CT^*$ and $L \in \CL$, the map $\tauTSL$ is a continuous triangle function on $(\Delp, d_L)$ if and only if:
\begin{enumerate}[label={\rm(\alph*)}]
\item $L$ is a continuous t-conorm that has no zero divisor;
\item $T^*$ is a continuous t-conorm on $[0,1]$ if $L \neq \max$.
\end{enumerate}
\end{thm}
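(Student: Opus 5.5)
Since a continuous triangle function is in particular a triangle function, Theorem \ref{thm:triangle_function_characterization} reduces the statement to the following claim. Assume $L$ is a continuous t-conorm without zero divisors, $L\neq\max$, and $T^*$ is a left-continuous t-conorm. Then $\tauTSL$ is continuous on $(\Delp,d_L)$ if and only if $T^*$ is continuous. If $L=\max$, then $\tauTSL(F,G)=\min\{F,G\}$. Continuity of the pointwise minimum under weak convergence follows from Proposition \ref{prop:weakly_converge}: at a continuity point $x$ of $\min\{F,G\}$, one checks that $\min\{F_n,G_n\}(x)\to\min\{F,G\}(x)$ by approximating $x$ by common continuity points of $F$ and $G$ from both sides. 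So that case is settled. Because $T^*$ is already a left-continuous t-conorm, continuity of $T^*$ is equivalent to its right-continuity. Dually, through $T(a,b)=1-T^*(1-a,1-b)$, it is equivalent to left-continuity of $T$.

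\emph{Sufficiency.} I will not rebuild the $\mathbb{E}'$ machinery here. Instead I plan to invoke the classical result of Schweizer and Sklar \cite[Theorem 7.2.8]{Schweizer1983} (see also \cite{SamingerPlatz2008}): if $T^*$ is a continuous t-conorm and $L$ is a continuous t-conorm (so $L$ is continuous on $[0,\infty]^2$ and increasing), then $\tauTSL$ is continuous on $(\Delp,d_L)$. If a self-contained argument is preferred, I would work in $\mathbb{E}'$. Take weakly convergent sequences $F_n\to F$ and $G_n\to G$, and a continuity point $x$ of $\tauTSL(F,G)$. The upper estimate comes from choosing a near-optimal $(u,v)$ for the infimum at $x$ and shifting $u,v$ slightly to continuity points. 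The lower estimate uses the attainment of the extremum (Lemma \ref{lem:supremum_attained} transported by $\Phi$) together with compactness of the fibres $L^{-1}(x)$, which lie in $[0,x]^2$. Full continuity of $T^*$ is what lets us pass to limits in both directions.

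\emph{Necessity.} This is the main obstacle. Suppose $T^*$ is not continuous. Then there are $a_0,b_0$ with $T^*(a_0,b_0)<\lim_{a\downarrow a_0}T^*(a,b_0)$, say a right jump in the first variable. As in Proposition \ref{prop:t_is_rightcont}, pick a non-idempotent Archimedean block of $L$ and a small $c$ in it on which $L$ is strictly increasing. I will build step distributions $F,G\in\Delp$ with $F=a_0$ on $]0,c]$, jumping to $1$ after $c$, and $G=b_0$ on $]0,c]$, jumping to $1$ after $c$, adjusted so that $\tauTSL(F,G)$ equals $T^*(a_0,b_0)$ on an interval. Then perturb by $F_n$ with $F_n=a_0+1/n$ on $]0,c]$. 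Clearly $F_n\to F$ weakly, and in fact uniformly. However, $\tauTSL(F_n,G)$ equals $T^*(a_0+1/n,b_0)$ on that same nondegenerate interval, which contains continuity points of the limit, and these values stay bounded away from $T^*(a_0,b_0)$. This contradicts weak convergence, and hence contradicts continuity by Proposition \ref{prop:weakly_converge}. The delicate step is the fibre computation showing that the infimum over $L(u,v)=x$ really isolates $T^*(a_0+1/n,b_0)$ on an interval rather than at a single point, since an isolated point could be a discontinuity point of the limit and would prove nothing. To get a whole interval, I will let $G$ take the value $b_0$ only on a short range and use the identity behaviour $T^*(\cdot,0)$ near $0$, mirroring the three-region computation of Proposition \ref{prop:t_is_tnorm}.
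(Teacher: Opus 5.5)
\textbf{Gap in sufficiency.} The classical continuity theorem you cite does not have the hypotheses you attribute to it. The Schweizer--Sklar result, and its counterpart for the infimum convolution \cite[Lemma~8.3]{SamingerPlatz2008}, also assume (L0) and the strictness condition (LS): $L(u_1,v_1)<L(u_2,v_2)$ whenever $u_1<u_2$ and $v_1<v_2$. The remark closing the paper points this out. A continuous t-conorm with no zero divisor need not satisfy (LS). For example, take the ordinal sum with $L(u,v)=\min\{u+v,1\}$ on $[0,1]^2$ and $L=\max$ elsewhere; then $L(0.6,0.6)=L(0.7,0.7)=1$. Removing (LS) is precisely the new content of the sufficiency direction, so it cannot be delegated to that citation.

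\textbf{The fallback sketch.} Your lower estimate is workable: the fibres are compact and lie in $[0,x]^2$, $\liminf_n F_n(u_n)\geq F(u)$ follows from weak convergence plus left-continuity of $F$, and then left-continuity of $T^*$ finishes it. The upper estimate as you describe it does not close. Suppose you take a near-optimal pair on the fibre $K_x$ itself. Shifting a coordinate upward may cross a jump of $F$ or $G$, since these are only left-continuous. Shifting downward lands on a fibre $K_y$ with $y<x$, which only bounds $H_n(y)\leq H_n(x)$ and gives no upper bound on $H_n(x)$.

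The missing idea, which is what the paper does, is to use that $x$ is a continuity point of $H=\tauTSL(F,G)$. Pick $z>x$ and $(u,v)\in K_z$ with $T^*(F(u),G(v))<H(z)+\eta$. Take continuity points $u_m\uparrow u$, $v_m\uparrow v$, keeping a zero coordinate fixed. By continuity of $L$, $L(u_m,v_m)>x$ for large $m$. Monotonicity of $H_n$ then gives $H_n(x)\leq H_n(L(u_m,v_m))\leq T^*(F_n(u_m),G_n(v_m))$. Let $n\to\infty$ (this is where right-continuity of $T^*$ is used), then $m\to\infty$, then $z\downarrow x$. This yields $\limsup_n H_n(x)\leq H(x)$ without any strictness assumption on $L$.

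\textbf{Necessity.} Your necessity half is correct and is essentially the paper's argument. Your one-variable jump with the constant shift $a_0+1/n$ is a harmless simplification of the paper's $(x/c)^n$ perturbation. With plain step functions you already get $\tauTSL(F_n,G)=T^*(a_0+1/n,b_0)$ on all of $]c,L(c,c)]$. Any $c$ with $L(c,c)>c$ works, so no Archimedean block is needed. The ``adjustment'' of $G$ is unnecessary. Worse, if it made $G$ vanish on an initial range, pairs with small $v$ would contribute $F_n(u)$ and could destroy the separation.
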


\begin{proof}
    Assume first that condition \ref{thm:triangle_function_characterization:L} holds and that $T^*$ is a continuous t-conorm when $L\neq\max$. If $L=\max$, then $\tauTSL(F,G)=\min\{F,G\}$. Moreover, for all $F,G,F',G'\in\Delta^+$, the pointwise minimum is nonexpansive for the modified L\'{e}vy metric:
    \[
        d_L(\min\{F,G\},\min\{F',G'\})
        \leq \max\{d_L(F,F'),d_L(G,G')\}.
    \]
    Hence $\tauTSL$ is a continuous triangle function in this case. Assume $L\neq\max$. By Theorem \ref{thm:triangle_function_characterization}, $\tauTSL$ is a triangle function. Let $F_n\xrightarrow{w}F$ and $G_n\xrightarrow{w}G$, and write $H_n=\tauTSL(F_n,G_n)$ and $H=\tauTSL(F,G)$. For every finite $x$, the fibre
    \[
        K_x=\{(u,v)\in[0,\infty]^2\mid L(u,v)=x\}
    \]
    is compact: it is closed by the continuity of $L$ and is contained in $[0,x]^2$ because $L$ is a t-conorm. Fix $x\in]0,\infty[$. For the lower estimate, choose $(u_n,v_n)\in K_x$ with
    \[
        T^*(F_n(u_n),G_n(v_n))\leq H_n(x)+\frac{1}{n}.
    \]
    Pass to a subsequence along which $H_n(x)$ converges to its lower limit; compactness then gives $(u_n,v_n)\ra(u,v)\in K_x$. Weak convergence and the left-continuity of $F$ and $G$ imply
    \[
        \liminf_{n\ra\infty}F_n(u_n)\geq F(u)
        \quad\text{and}\quad
        \liminf_{n\ra\infty}G_n(v_n)\geq G(v).
    \]
    Indeed, if $0<u'<u$ is a continuity point of $F$, then $u_n\geq u'$ eventually and $F_n(u_n)\geq F_n(u')\ra F(u')$; letting $u'\uparrow u$ through continuity points and using the left-continuity of $F$ gives the first inequality (the case $u=0$ is immediate), and the argument for $G$ is the same.
    The monotonicity and continuity of $T^*$ therefore yield
    \[
        \liminf_{n\ra\infty}H_n(x)\geq T^*(F(u),G(v))\geq H(x).
    \]
    For the upper estimate, fix $z>x$ and $\eta>0$, and choose $(u,v)\in K_z$ such that
    \[
        T^*(F(u),G(v))<H(z)+\eta.
    \]
    Choose sequences $u_m\uparrow u$ and $v_m\uparrow v$, keeping a zero coordinate fixed when necessary, so that every positive $u_m$ and $v_m$ is a continuity point of the corresponding distribution function. By the continuity of $L$, we have $y_m:=L(u_m,v_m)>x$ for all sufficiently large $m$. For such $m$,
    \[
        H_n(x)\leq H_n(y_m)\leq T^*(F_n(u_m),G_n(v_m)).
    \]
    Letting first $n\ra\infty$ and then $m\ra\infty$ gives
    \[
        \limsup_{n\ra\infty}H_n(x)\leq T^*(F(u),G(v))<H(z)+\eta.
    \]
    Since $z>x$ and $\eta>0$ are arbitrary,
    \[
        H(x)\leq\liminf_{n\ra\infty}H_n(x)
        \leq\limsup_{n\ra\infty}H_n(x)\leq\lim_{z\ra x^+}H(z).
    \]
    Consequently, $H_n(x)\ra H(x)$ at every continuity point of $H$. Proposition \ref{prop:weakly_converge} now yields $H_n\xrightarrow{w}H$, so $\tauTSL$ is continuous.

    Conversely, suppose that $\tauTSL$ is a continuous triangle function. By Theorem \ref{thm:triangle_function_characterization}, condition \ref{thm:triangle_function_characterization:L} holds and $T^*$ is a left-continuous t-conorm whenever $L\neq\max$. It remains to show in the latter case that $T^*$ is right-continuous. Suppose, for the sake of contradiction, that $T^*$ is not right-continuous. Then there exist $x_0, y_0 \in [0,1[$ such that $\inf \, \{ T^* (x, y) \mid x > x_0, y > y_0 \} > T^* (x_0, y_0)$. Choose $c\in ]0,\infty[$ such that $L(c,c) > c$; such a $c$ exists, for otherwise monotonicity and the identity law would give $L(u,v)=\max\{u,v\}$ for all $u,v$, contradicting $L\neq\max$. Define $f,g \in \Delp$ as follows:
    \[
        f(x) = \begin{cases}
            0, & x = 0,\\
            x_0, & 0 < x \leq c,\\
            1, & x > c,
        \end{cases}
        \quad     
        g(x) = \begin{cases}
            0, & x = 0,\\
            y_0, & 0 < x \leq c,\\
            1, & x > c.
        \end{cases}
    \]
    For $0<x\leq c$, every pair $(u,v)$ with $L(u,v)=x$ has $u,v\leq c$, and the pairs with one coordinate equal to $0$ show that the infimum is $\min\{x_0,y_0\}$. For $c<x\leq L(c,c)$, a pair with both coordinates in $]0,c]$ exists by the continuity of $L$, and it gives the value $T^*(x_0,y_0)$; every other pair gives a value at least this large. Finally, if $x>L(c,c)$, each pair with $L(u,v)=x$ has $u>c$ or $v>c$, so the value is $1$. Hence
    \[
        \tauTSL (f,g)(x) =
    \begin{cases}
        0, & x=0, \\
        \min\,\{x_0,y_0\}, & 0 < x \leq c, \\
        T^* (x_0,y_0), & c < x \leq L(c,c),\\
        1, & x > L(c,c).
    \end{cases}
    \]
    
    For each $n\in \bbN$, define
    \[
        f_n (x) = \begin{cases}
            0, & x = 0,\\
            x_0 + (1 - x_0) (\frac{x}{c})^n, & 0 < x \leq c,\\
            1, & x > c,
        \end{cases}
\quad
        g_n (x) = \begin{cases}
            0, & x = 0,\\
            y_0 + (1 - y_0) (\frac{x}{c})^n, & 0 < x \leq c,\\
            1, & x > c.
        \end{cases}
    \]

    \begin{center}
    \begin{tikzpicture}[scale=1.5, font=\small]
      \draw[->] (-0.5,0) -- (4,0) node[right] {$x$};
      \draw[->] (0,-0.5) -- (0,1.5) node[above] {$y$};
  
      \node[below left] at (0,0) {$O$};
      \def\c{2}
      \def\xzero{0.3}
  
      \draw[dashed] (\c,0) node[below] {$c$} -- (\c,1);
      \draw[dashed] (0,1) node[left] {$1$} -- (3.5,1);
      
      \foreach \n in {1, 2, 3, 5} {
        \draw[thin,domain=0.01:\c,samples=100] 
          plot (\x, {\xzero + (1-\xzero)*pow(\x/\c,\n)});
        \draw[thin] (\c,1) -- (3.5,1);
      }
      
      \filldraw (0,0) circle (1.2pt);
      \foreach \y in {\xzero} {
        \draw (0,\y) circle (1.2pt);
      }
      \node[left] at (0, \xzero) {$x_0$};
      \node[right] at (3,1.3) {$f_n(x)$};
      \node[black,right] at (3,1.1) {$n=1,2,3,5$};
    \end{tikzpicture}
    \end{center}
    
    \begin{center}
    \begin{tikzpicture}[scale=1.5, font=\small]
      \draw[->] (-0.5,0) -- (4,0) node[right] {$x$};
      \draw[->] (0,-0.5) -- (0,1.5) node[above] {$y$};
  
      \node[below left] at (0,0) {$O$};
      \def\c{2}
      \def\yzero{0.6}
  
      \draw[dashed] (\c,0) node[below] {$c$} -- (\c,1);
      \draw[dashed] (0,1) node[left] {$1$} -- (3.5,1);
      
      \foreach \n in {1, 2, 3, 5} {
        \draw[thin,domain=0.01:\c,samples=100] 
          plot (\x, {\yzero + (1-\yzero)*pow(\x/\c,\n)});
        \draw[thin] (\c,1) -- (3.5,1);
      }
      
      \filldraw (0,0) circle (1.2pt);
      \foreach \y in {\yzero} {
        \draw (0,\y) circle (1.2pt);
      }
      \node[left] at (0, \yzero) {$y_0$};
      \node[right] at (3,1.3) {$g_n(x)$};
      \node[black,right] at (3,1.1) {$n=1,2,3,5$};
    \end{tikzpicture}
    \end{center}
    
    Clearly, $f_n \xrightarrow{w} f$ and $g_n \xrightarrow{w} g$. Now we show that the sequence $\tauTSL(f_n, g_n)$ does not converge to $\tauTSL(f, g)$ on the open interval $]c,L(c,c)[$. Given $x \in ]c, L(c,c)[$, suppose that $L(u,v) = x$. The pairs with a zero coordinate must be handled separately. If $u=0$, then $v=x>c$, and hence
    \[
        T^*(f_n(u),g_n(v))=T^*(0,1)=1.
    \]
    Similarly, if $v=0$, then $u=x>c$ and $T^*(f_n(u),g_n(v))=T^*(1,0)=1$. Assume now that $u,v>0$. They cannot be simultaneously greater than or equal to $c$, as that would imply $L(u,v) \geq L(c,c) > x$. If at least one of them is greater than or equal to $c$, say $u \geq c$, then $f_n(u)=1$, so $T^*(f_n(u),g_n(v))=1$. If both $u<c$ and $v<c$, then $f_n(u)>x_0$ and $g_n(v)>y_0$. In every case,
    \[
        T^*(f_n(u),g_n(v)) \geq \inf \, \{ T^* (\alpha, \beta) \mid \alpha > x_0, \beta > y_0 \}.
    \]
    
    Taking the infimum over all such pairs $(u,v)$, we have
    \[
        \tauTSL(f_n, g_n)(x) = \inf_{L(u,v) = x} T^* (f_n (u), g_n (v)) \geq \inf \, \{ T^* (\alpha, \beta) \mid \alpha > x_0, \beta > y_0 \}.
    \]
    Taking lower limits as $n\ra\infty$, we obtain
    \[\liminf\limits_{n \rightarrow \infty} \tauTSL(f_n, g_n)(x) \geq \inf \, \{ T^* (\alpha, \beta) \mid \alpha > x_0, \beta > y_0 \}> T^* (x_0, y_0)= \tauTSL (f,g)(x).\]
    Together with the preceding pointwise bound, this implies a strict separation between $\tauTSL(f_n,g_n)$ and $\tauTSL(f,g)$ on the interval $]c, L(c,c)[$ for all $n\in \mathbb{N}$. Since $\tauTSL(f,g)$ is constant on $]c,L(c,c)[$, every point of this interval is a continuity point of $\tauTSL(f,g)$. Therefore, by Proposition \ref{prop:weakly_converge}, the sequence $\tauTSL(f_n, g_n)$ fails to weakly converge to $\tauTSL(f, g)$, implying that $\tauTSL$ is not continuous. Thus, we arrive at a contradiction.
\end{proof}

\begin{rem}
Under the additional conditions (L0) and (LS) on $L$, the classical sufficiency result for the continuity of the infimum convolution $\tauTSL$ is given in \cite[Lemma~8.3]{SamingerPlatz2008}. For the supremum convolution $\tauTL$, \cite[Lemma~7.14]{SamingerPlatz2008} gives an analogous necessity result for continuity of the generating commutative semicopula under these conditions. Here (L0) means that $0$ is the identity of $L$, and (LS) requires $L(u_1,v_1)<L(u_2,v_2)$ whenever $u_1<u_2$ and $v_1<v_2$; see \cite[Definition~3.5]{SamingerPlatz2008}. Theorem \ref{thm:continuous_triangle_function_characterization} does not require (LS). The proof above therefore covers the more general class of continuous t-conorms with no zero divisors considered here.
\end{rem}

\section*{Acknowledgments}

The authors acknowledge support from the National Natural Science Foundation of China (Grant Nos. 12671089 and 12671561). The authors would like to thank Professor Hongliang Lai for helpful discussions.

\section*{Declaration of generative AI and AI-assisted technologies
in the manuscript preparation process}

During the preparation of this work, the authors used ChatGPT to assist with manuscript organization, language editing, consistency checking, and submission-readiness review. After using this tool, the authors reviewed and edited the content as needed and take full responsibility for the content of the published article.

\bibliographystyle{abbrv}
\bibliography{lili}

\end{document}